\newtheorem{theorem}{Theorem}
\newtheorem{lemma}[theorem]{Lemma}
\newtheorem{remark}[theorem]{Remark}
\newtheorem{corollary}[theorem]{Corollary}
\newtheorem{proposition}[theorem]{Proposition}
\newtheorem{definition}[theorem]{Definition}
\def\Re{\mathop{\rm Re}\nolimits}
\def\erf{\mathop{\rm erf}\nolimits}
\def\Index{\mathop{\rm Index}\nolimits}
\def\Im{\mathop{\rm Im}\nolimits}
\newcommand{\C}{\mathbb C}
\newcommand{\R}{\mathbb R}
      \def\@setcopyright{}
      \def\serieslogo@{}
\begin{document}

   \author{Dan Kucerovsky}
   \address{D. Kucerovsky, Department of Mathematics, University of New Brunswick, Fredericton, NB E3B 5A3, Canada}
   \email{dan@math.unb.ca}

  \author{Amir T. P. Najafabadi}

   \address{A. T. P. Najafabadi, Department of Mathematical Sciences, Shahid Beheshti University, G.C. Evin, 1983963113, Tehran, Iran}

\email{amirtpayandeh@sbu.ac.ir}

   \author{Aydin Sarraf}

   \address{A. Sarraf, Department of Computer Science, University of New Brunswick, Fredericton, NB E3B 5A3, Canada}

\email{Aydin.Sarraf@unb.ca}

   \title[On the Riemann-Hilbert factorization problem for positive definite functions]{On the Riemann-Hilbert factorization problem for positive definite functions}

   \begin{abstract}
    We give several general theorems concerning positive definite solutions of  Riemann-Hilbert problems on the real line. Furthermore, as an example, we apply our theory to the characteristic function of a class of L\'{e}vy processes and we find the distribution of their extrema at a given stopping time.
   \end{abstract}

 \maketitle


 \section{Introduction}
 The Wiener-Hopf factorization method was initially used for solving certain integral equations \cite{hopf1932klasse}. This method leads to Wiener-Hopf equations which can also be recast, by taking a Fourier transform, in the form of a Riemann-Hilbert problem \cite{hilbert,riemann}. Although there is a well-known uniqueness theory based on the index for Riemann-Hilbert problems, the function class usually considered is H\"{o}lder continuous functions going to one at infinity, and as will be seen, this is not always the right class for applications. We consider the problem of finding positive definite solutions to Riemann-Hilbert problems on the real line.
This problem has a very natural connection with the theory of characteristic functions.
This is because, by Bochner's theorem, a positive definite function $\phi:\mathbb{R}^n\longrightarrow \mathbb{C}$ which is continuous at the origin and satisfies the equation $\phi(0)=1$, is the characteristic function of some random variable and vice versa.
Therefore, positive definite functions and characteristic functions are nearly synonymous. Furthermore, characteristic functions and probability density functions are intimately connected in the sense that probability density functions are inverse Fourier transforms of characteristic functions and characteristic functions are Fourier transforms of probability density functions. Applications of Wiener-Hopf factorization methods in finding the distribution of certain random variables in statistics dates back to Baxter, Donsker \cite{baxter1957distribution} and Pecherskii, Rogozin \cite{pecherskii1969joint,rogozin1966distributions}. Since characteristic functions are positive definite, we are interested in positive definite solutions of Riemann-Hilbert problems. This is particularly important when the solution cannot be shown to be unique.

In the usual uniqueness theory for Riemann-Hilbert factorizations, the key condition that must be imposed on the function $f$ is that it must go to a nonzero constant at infinity: this condition is needed to insure that the so-called index of the function $f$ is well-defined and stable under small perturbations (see Definition \ref{def:index} and the discussion there). As will be seen later, the theory of L\'evy processes gives natural examples of Riemann-Hilbert problems where we seek to factorize a characteristic function as the product of two characteristic functions, but the condition of going to a nonzero constant at infinity is too strong. Thus the usual uniqueness theory for Riemann-Hilbert problems becomes problematic. However, what is really needed in this application is not so much the uniqueness theory as just the existence of a factorization where both factors are positive definite, as in the abovementioned result.

 The classical multiplicative homogeneous Riemann-Hilbert problem is to determine a sectionally analytic function $\Phi$ with a jump discontinuity on an oriented contour $L$ consisting of a finite number of simple smooth closed or open curves, and with sections $\Phi_+$ and $\Phi_-$ constituting the boundary values on $L$, which satisfies on $L$ the boundary condition $G(t)=\Phi_+(t)\Phi_-(t)$ where the function $G$, $G(t)\neq 0$; satisfies the H\"{o}lder condition everywhere except at a finite number of points where it may have discontinuities of the first kind. 

We restrict ourselves to the case where the contour is the real line, because the Bochner theory of positive definite functions is only available for a very few contours, the cases of the real line and the unit circle are the only ones that appear to be tractable.

Our main result (Corollary \ref{cor:factorization}) states that under certain conditions, a function $f$ on the real line can be factorized in the form $f=\Phi_{+} \Phi_{-}$ with $\Phi_{+}$ and $\Phi_{-}$ are positive definite if and only if $f$ is positive definite.

 A significant difficulty with the classical singular integral solutions to Wiener-Hopf equations in particular and Riemann-Hilbert problems in general, is that these solutions often involve integrals that are both analytically intractable and difficult to evaluate numerically (c.f. \cite{kucerovsky2009approximation}). In this paper, we develop a method which gives positive definite solutions to certain Riemann-Hilbert problems. Furthermore, our method doesn't suffer from the aforementioned difficulties and gives exact results that can be truncated with a known error term to provide accurate numerical approximations directly.\\ 
\section{Definitions and Preliminaries}
 We define positive definiteness for functions as follows:

\begin{definition} A function on the real line is  positive definite if and only if the Fourier transform exists everywhere and takes values in the non-negative real numbers.
\end{definition}

We define positive semi-definiteness for matrices as follows:

\begin{definition} A matrix is \emph{positive semi-definite} if and only if it is Hermitian and has non-negative eigenvalues.\end{definition}

It was shown by Bochner \cite{bochner1933monotone} that the following Theorem holds:
\begin{theorem} A function $f$ is positive definite if and only if the matrices of the form
$[f(x_i-x_j)]$  are positive semi-definite matrices, for all choices of finite subsets of points $\{x_i\}\subset \R.$ \label{th:bochner}
\end{theorem}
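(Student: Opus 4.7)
The plan is to prove the biconditional by treating each direction separately: the forward implication is essentially a change in the order of integration, while the reverse implication relies on the classical spectral-representation theory.

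For the forward direction, assume $\hat{f}$ exists everywhere and is nonnegative. Fix a finite set $\{x_1, \ldots, x_n\} \subset \R$ and complex scalars $c_1, \ldots, c_n$. Apply Fourier inversion to each matrix entry,
\begin{equation*}
f(x_i - x_j) = \frac{1}{2\pi}\int_\R \hat{f}(\xi) e^{i(x_i - x_j)\xi}\, d\xi,
\end{equation*}
and regroup the exponentials to obtain
\begin{equation*}
\sum_{i,j=1}^n c_i \bar{c}_j f(x_i - x_j) = \frac{1}{2\pi}\int_\R \hat{f}(\xi) \left|\sum_{k=1}^n c_k e^{ix_k \xi}\right|^2 d\xi \geq 0.
\end{equation*}
Hermiticity of $[f(x_i - x_j)]$ follows from the same inversion formula, since $\hat{f}$ being real forces $f(-x) = \overline{f(x)}$.

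For the reverse direction, assume every matrix $[f(x_i - x_j)]$ is positive semi-definite. The strategy is to manufacture a positive measure from this data. On finitely supported complex functions $c:\R \to \C$, the sesquilinear pairing $(c,d) \mapsto \sum_{i,j} c(x_i)\overline{d(x_j)} f(x_i - x_j)$ is positive semidefinite by hypothesis, and a standard GNS / Riesz--Markov construction produces a finite positive Borel measure $\mu$ on $\R$ with $f(x) = \int_\R e^{ix\xi}\, d\mu(\xi)$; this is the classical content of Bochner's theorem. Under the paper's standing hypothesis that $\hat{f}$ exists everywhere as an honest function, one then argues that $\mu$ is absolutely continuous with respect to Lebesgue measure and that its Radon--Nikodym derivative agrees with $\hat{f}$ up to the $2\pi$ normalization, so positivity of $\mu$ yields $\hat{f} \geq 0$.

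The main obstacle is the last step of the reverse direction: the abstract construction only produces a measure, and matrices $[f(x_i - x_j)]$ can be positive semi-definite without $f$ admitting a pointwise Fourier transform (for instance, $f \equiv 1$ corresponds to $\mu = \delta_0$, which carries no Lebesgue density). The paper's pointwise-existence hypothesis on $\hat{f}$ is precisely what rules out such pathologies, but to make the implication rigorous one must invoke an approximate-identity or convolution-smoothing argument to identify $\hat{f}$ with $d\mu/d\xi$. By contrast, the forward direction and Hermiticity check amount to routine manipulations under the integral sign.
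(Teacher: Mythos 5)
The paper does not actually prove this statement: it is quoted as Bochner's theorem with a citation to Bochner's 1933 paper, so there is no internal proof to compare yours against. Judged on its own terms, your forward direction is the standard argument and is essentially right, but note that the paper's definition of positive definiteness only asserts that $\hat f$ exists everywhere and is nonnegative; to write $f(x_i-x_j)=\frac{1}{2\pi}\int_\R \hat f(\xi)e^{i(x_i-x_j)\xi}\,d\xi$ you must additionally justify pointwise Fourier inversion (for instance $\hat f\in L^1(\R)$ together with continuity of $f$, which is exactly the combination the paper later arranges in Corollary \ref{cor:sh}). The same caveat applies to your Hermiticity check, which also passes through the inversion formula.

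Your diagnosis of the reverse direction is the substantive point, and it is correct: with the paper's definition, the implication ``all matrices $[f(x_i-x_j)]$ positive semi-definite $\Rightarrow$ $f$ positive definite'' is literally false, since $f\equiv 1$ has all matrices positive semi-definite while its Fourier transform is $2\pi\delta_0$ rather than an everywhere-defined nonnegative function. Moreover, the classical Bochner theorem produces a finite positive measure $\mu$ with $f(x)=\int_\R e^{ix\xi}\,d\mu(\xi)$ only under the additional hypothesis that $f$ is continuous (your GNS/Riesz--Markov step uses this silently; without continuity even the representation by a measure fails). You then still need to show $\mu$ is absolutely continuous with density $\hat f/2\pi$ before the conclusion $\hat f\ge 0$ makes sense; you name the required approximate-identity argument but do not carry it out. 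So the theorem as stated requires either the classical notion of positive definiteness (Fourier transform of a finite positive measure, with $f$ continuous) or an added standing hypothesis that $\hat f$ exists pointwise: your proposal correctly locates this defect but leaves the reverse implication as a sketch.
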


Schur has obtained the following interesting property of positive semi-definite matrices \cite{schur1911bemerkungen}:
\begin{theorem}\label{schur} The elementwise product $[a_{ij}b_{ij}]$ of positive semi-definite matrices $[a_{ij}]$ and $[b_{ij}]$ is positive semi-definite.\label{th:schur}\end{theorem}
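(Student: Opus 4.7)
The plan is to reduce the claim to the rank-one case via the spectral decomposition of Hermitian positive semi-definite matrices. First I would invoke the spectral theorem to write
\[
[a_{ij}] \;=\; \sum_{k} \lambda_k\, u^{(k)} (u^{(k)})^{*}, \qquad [b_{ij}] \;=\; \sum_{\ell} \mu_\ell\, v^{(\ell)} (v^{(\ell)})^{*},
\]
where the scalars $\lambda_k,\mu_\ell$ are non-negative because the matrices are positive semi-definite, and where the vectors $u^{(k)}, v^{(\ell)}$ are the corresponding eigenvectors. Because the Hadamard (entrywise) product is bilinear, it will then suffice to show that each term $\lambda_k \mu_\ell \bigl(u^{(k)}(u^{(k)})^*\bigr) \circ \bigl(v^{(\ell)}(v^{(\ell)})^*\bigr)$ is positive semi-definite, and to sum.

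The key algebraic observation I would verify next is the rank-one identity: for column vectors $u,v\in\C^n$,
\[
\bigl(u u^{*}\bigr) \circ \bigl(v v^{*}\bigr) \;=\; w\, w^{*}, \qquad w_i := u_i v_i,
\]
which is immediate by comparing entries, since the $(i,j)$ entry on each side equals $u_i \overline{u_j}\, v_i \overline{v_j} = (u_i v_i)\overline{(u_j v_j)}$. A matrix of the form $w w^{*}$ is Hermitian with a single non-negative eigenvalue $\|w\|^2$, hence positive semi-definite.

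Combining these, $[a_{ij} b_{ij}]$ will be exhibited as a non-negative linear combination of rank-one positive semi-definite matrices $w^{(k,\ell)}(w^{(k,\ell)})^{*}$ with $w^{(k,\ell)}_i = u^{(k)}_i v^{(\ell)}_i$; since the cone of positive semi-definite matrices is closed under non-negative linear combinations, the conclusion follows. I do not anticipate a genuine obstacle: the only step that requires any care is the rank-one identity, and even that is a direct entrywise computation. An alternative route would be to realize $[a_{ij}b_{ij}]$ as a principal submatrix of the Kronecker product $[a_{ij}]\otimes[b_{ij}]$, which is positive semi-definite whenever both factors are, but the spectral-decomposition argument above is more self-contained and fits the elementary style of this section.
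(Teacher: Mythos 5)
Your argument is correct and complete: the spectral decomposition into non-negative combinations of rank-one Hermitian matrices, the entrywise identity $(uu^*)\circ(vv^*)=ww^*$ with $w_i=u_iv_i$, and the closure of the positive semi-definite cone under non-negative sums together give the Schur product theorem in the standard way. The paper itself supplies no proof for this statement --- it simply cites Schur's 1911 paper --- so there is nothing to compare against; your proof is the usual textbook argument and would serve as a valid self-contained justification.
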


Combining these two Theorems, we prove the following:
\begin{theorem} If $f(x)$ is positive definite then $\exp(k f)$ is positive definite (for all scalars $k>0$). If $f(x)^\lambda$ is positive definite for all $\lambda>0,$ and $f$ is bounded away from zero, then $\log f$ differs by a constant from a positive definite function.
\label{th:positivedef}\end{theorem}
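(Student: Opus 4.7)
For the first assertion, the plan is to translate everything into the matrix picture via Theorem~\ref{th:bochner}. Since $f$ is positive definite, every matrix $[f(x_i-x_j)]$ is positive semi-definite, and Theorem~\ref{th:schur} applied inductively shows that every elementwise power $[f(x_i-x_j)^n]$ is positive semi-definite. For $n=0$ we interpret this as the rank-one matrix $J$ of all ones, which is trivially PSD. Non-negative linear combinations and pointwise limits of PSD matrices are PSD, so the partial sums of the exponential series
\[
\sum_{n=0}^{\infty} \frac{k^n}{n!}\bigl[f(x_i-x_j)^n\bigr] \;=\; \bigl[\exp(k f(x_i-x_j))\bigr]
\]
are PSD and so is the limit. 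Applying Theorem~\ref{th:bochner} in the reverse direction yields that $\exp(kf)$ is positive definite.

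For the second assertion the strategy is to run this argument in reverse by differentiating at $\lambda=0$. Taylor expansion gives $f^\lambda = 1 + \lambda \log f + O(\lambda^2)$, so
\[
[f^\lambda(x_i-x_j)] \;=\; J + \lambda\,[\log f(x_i-x_j)] + O(\lambda^2).
\]
Testing the PSD condition against any vector $v$ with $\sum_i v_i = 0$ kills the leading term and, after dividing by $\lambda$ and letting $\lambda \to 0^+$, yields $v^*\,[\log f(x_i-x_j)]\,v \geq 0$. Thus $[\log f(x_i-x_j)]$ is PSD on the hyperplane orthogonal to the all-ones vector, i.e., $\log f$ is \emph{conditionally} positive definite.

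The remaining step, which is the main obstacle, is to upgrade conditional positive definiteness to ordinary positive definiteness after subtracting a single real constant $c$ chosen independently of the finite set $\{x_i\}$. Concretely, the plan is to exhibit a real $c$ such that $[\log f(x_i - x_j)] - c\,J$ is PSD for every finite set; by Theorem~\ref{th:bochner} this means $\log f - c$ is positive definite. A Schur-complement computation in each block identifies the largest admissible $c$ for that block, and the single-point case $n=1$ gives the ceiling $c \le \log f(0)$. Uniformity of $c$ across all finite sets is where the hypothesis that $f$ is bounded away from zero enters: combined with the a priori bound $|f|\le f(0)$ that comes with positive definiteness, it forces $\log f$ to be bounded and rules out derivative-of-delta singularities in its Fourier transform, so that the conditional PD property does in fact collapse to ``positive definite plus constant.'' Once such a uniform $c$ is produced, Theorem~\ref{th:bochner} applied in the reverse direction finishes the proof.
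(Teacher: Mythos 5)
Your proof of the first assertion is essentially the paper's own argument: pass to matrices via Theorem~\ref{th:bochner}, iterate Theorem~\ref{th:schur} to get that all elementwise powers $[f(x_i-x_j)^n]$ are positive semi-definite, and use the positivity of the Taylor coefficients of $\exp$ to write $[\exp(kf(x_i-x_j))]$ as a convergent sum of positive semi-definite matrices. That part is correct and needs no further comment.

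For the second assertion there is a genuine gap. (The paper does not prove this claim either; it cites \cite[p.~531]{schoenberg1938metric}, see also \cite{von1941fourier}.) Your differentiation at $\lambda=0$ correctly yields that $\log f$ is \emph{conditionally} positive definite --- that is the easy half of Schoenberg's theorem --- but the step you yourself flag as ``the main obstacle,'' upgrading a bounded conditionally positive definite function to a constant plus a positive definite function, is precisely the nontrivial content of the cited result, and your sketch does not establish it. Note that conditional positive definiteness alone cannot suffice: $g(x)=-x^2$ is conditionally positive definite (the Gaussians $e^{-tx^2}$ are positive definite for all $t>0$), yet $-x^2$ is unbounded and so cannot differ by a constant from a positive definite function; this corresponds to $f(x)=e^{-x^2}$, which satisfies every hypothesis except being bounded away from zero. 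So boundedness of $\log f$ must enter in an essential, quantitative way, and your remarks about a ``Schur-complement computation,'' the ceiling $c\le\log f(0)$, and ``ruling out derivative-of-delta singularities in the Fourier transform'' gesture at the right phenomena without constituting an argument --- in particular, nothing in the sketch produces a single constant $c$ that works uniformly over all finite point configurations. The standard way to close this gap is through the L\'evy--Khintchine/Schoenberg integral representation of conditionally positive (negative) definite functions: boundedness forces the quadratic and drift terms to vanish and the L\'evy measure to be finite, after which the representation exhibits $\log f$ as a constant plus the Fourier transform of a finite positive measure. Either carry out that representation (or an equivalent compactness argument), or do as the paper does and cite Schoenberg for this half of the statement.
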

\begin{proof} First of all, we show that if $f$ is positive definite, then so is   $\exp(k f).$
By Theorem \ref{th:bochner}, this is equivalent to showing that if matrices of the form
$[f(x_i-x_j)]$ are positive semi-definite, then so are the matrices $[\exp(kf(x_i-x_j))].$ But
by Theorem \ref{th:schur} we have first of all that if $[f(x_i-x_j)]$ is positive semi-definite
then the same holds for $[f(x_i-x_j)^m],$ where  $m$ is  a positive integer. The exponential function has an entire Taylor series expansion $\exp( k t)=\sum_0^\infty \frac{k^m t^m}{m!}$ where the coefficients are positive, so  $[\exp(kf(x_i-x_j))]$ can be written as a convergent sum of positive semi-definite matrices, and is hence positive semi-definite. The second claim is proven in \cite[p.531]{schoenberg1938metric}, see also \cite{von1941fourier}.
 \end{proof}

Finally, we recall the following:
\begin{theorem}
\label{Shannon} Suppose that $f$ has a Fourier transform that is zero outside the interval  $[-\frac{\pi}{h},\frac{\pi}{h}].$ Then,
\begin{equation*}
f(x)=\sum_{-\infty}^{\infty} f\left(nh\right)\frac{\sin\frac{\pi}{h}(x-nh)}{\frac{\pi}{h}(x-nh)}.
\end{equation*}
More generally, if $f$ is continuous, square-integrable, and is represented as the Fourier transform of a function $\hat{f}$ in $L^1(\R)$ then
$$\left|f(x)-\sum_{-\infty}^{\infty} f\left(nh\right)\frac{\sin\frac{\pi}{h}(x-nh)}{\frac{\pi}{h}(x-nh)}\right|$$ is bounded by $$ \frac{1}{\pi}\int_{|\omega|\geq \frac{\pi}{h}} | \hat{f}(\omega)|\,d\omega.$$
\end{theorem}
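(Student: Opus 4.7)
\bigskip

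\noindent\textbf{Proof plan for Theorem \ref{Shannon}.} The plan is to prove the two assertions in sequence, treating the first as a special case whose structure will guide the error estimate in the second.

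For the first assertion, I would start from the Fourier inversion formula $f(x)=\tfrac{1}{2\pi}\int_{-\pi/h}^{\pi/h}\hat f(\omega)e^{i\omega x}\,d\omega$, which makes sense because $\hat f$ is supported in $[-\pi/h,\pi/h]$. Since $\hat f$ is (the restriction of) an $L^{1}$ function on that compact interval, it admits an $L^{2}$ Fourier series expansion with period $2\pi/h$, namely $\hat f(\omega)=h\sum_{n}f(nh)e^{-inh\omega}$; the coefficients $h\,f(nh)$ are obtained by writing the usual Fourier-series coefficient formula on $[-\pi/h,\pi/h]$ and recognising the result as $2\pi/h$ times the inversion integral evaluated at $x=nh$. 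Substituting this expansion into the inversion integral and integrating $e^{i\omega(x-nh)}$ over $[-\pi/h,\pi/h]$ term by term produces exactly $\sin(\pi(x-nh)/h)/(\pi(x-nh)/h)$, which gives the sampling formula.

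For the second assertion, let $g(x)$ denote the right-hand side of the interpolation formula. I would first compute $\hat g$. The Fourier transform of each sinc kernel $\sin(\pi(x-nh)/h)/(\pi(x-nh)/h)$ is $h\,\chi_{[-\pi/h,\pi/h]}(\omega)\,e^{-inh\omega}$, so formally
\[
\hat g(\omega)=h\,\chi_{[-\pi/h,\pi/h]}(\omega)\sum_{n}f(nh)\,e^{-inh\omega}.
\]
Poisson summation identifies the periodic factor $h\sum_{n}f(nh)e^{-inh\omega}$ with the periodisation $\sum_{k}\hat f(\omega+2\pi k/h)$, so
\[
\hat g(\omega)=\chi_{[-\pi/h,\pi/h]}(\omega)\sum_{k\in\mathbb Z}\hat f\!\left(\omega+\tfrac{2\pi k}{h}\right).
\]
Using $\hat f\in L^{1}$, Fourier inversion gives the pointwise bound $|f(x)-g(x)|\le\tfrac{1}{2\pi}\|\hat f-\hat g\|_{1}$. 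Splitting the integral of $|\hat f-\hat g|$ at $|\omega|=\pi/h$: outside the interval the integrand is simply $|\hat f(\omega)|$; inside the interval it equals $|\sum_{k\ne 0}\hat f(\omega+2\pi k/h)|$, and a change of variable translates this aliasing sum into another copy of $\int_{|\omega|\ge\pi/h}|\hat f(\omega)|\,d\omega$. The two pieces combine to give the stated bound $\tfrac{1}{\pi}\int_{|\omega|\ge\pi/h}|\hat f(\omega)|\,d\omega$.

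The main obstacle is the justification of the Poisson summation step, since the hypotheses only give $f\in C\cap L^{2}$ and $\hat f\in L^{1}$. The periodisation $F(\omega):=\sum_{k}\hat f(\omega+2\pi k/h)$ is a well-defined $L^{1}$ function on $[-\pi/h,\pi/h]$ because $\hat f\in L^{1}(\mathbb R)$, and its Fourier coefficients with respect to the basis $\{e^{-inh\omega}\}$ equal $h\,f(nh)$ by Fubini together with the inversion formula. Thus the identity $h\sum_{n}f(nh)e^{-inh\omega}=F(\omega)$ holds at least in the sense of Fourier series, which is exactly what is needed to justify the computation of $\hat g$. The interchange of sum and integral in defining $g$, and the absolute convergence of $\sum_{n}f(nh)\,\mathrm{sinc}(\cdot)$ needed to make $g$ meaningful pointwise, can be handled by an approximation argument cutting off $\hat f$ outside a large symmetric interval and passing to the limit using $\hat f\in L^{1}$.
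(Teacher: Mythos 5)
The paper does not actually prove this theorem: it is quoted as a classical result, with the band-limited case attributed to Shannon \cite{shannon1949communication} and the aliasing error bound to \cite[Theorem B]{butzer2005classical}. So there is no in-paper argument to compare against; what you have written is essentially the standard literature proof, and it is sound in outline. Your first part is the classical argument (Fourier series of $\hat f$ on $[-\pi/h,\pi/h]$ with coefficients $h f(nh)$, substituted into the inversion integral), and your second part is the standard aliasing estimate: $\hat g$ is the truncated periodization of $\hat f$, so $\|\hat f-\hat g\|_{1}\le 2\int_{|\omega|\ge\pi/h}|\hat f|$, and the inversion formula converts this into the stated $\tfrac{1}{\pi}$ bound. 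Two points deserve tightening. First, in the band-limited case you justify the $L^{2}$ Fourier series of $\hat f$ by saying it is an $L^{1}$ function on a compact interval; $L^{1}$ alone does not give an $L^{2}$-convergent Fourier series --- you should instead invoke $f\in L^{2}$ and Plancherel to get $\hat f\in L^{2}$ on the interval, which then licenses the term-by-term integration. Second, under the weak hypotheses of the general statement ($f\in C\cap L^{2}$, $\hat f\in L^{1}$) the samples $f(nh)$ are merely the Fourier coefficients of the $L^{1}$ periodization, so they need not be square-summable, and the pointwise convergence of the cardinal series is the genuinely delicate part; you correctly flag this and propose a truncation-and-limit argument, but that step is where the real work of \cite{butzer2005classical} lies, so as written your proposal is a correct plan rather than a complete proof.
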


The above theorem has a complicated and interesting history, which is reviewed in \cite{splettstosser198375}. There exist a multitude of early versions of the theorem, such as in \cite{splettstosser1978some}, or the very early paper by de la Vall\'ee Poussin \cite{poussin1908convergence}. The first part of the theorem as quoted above comes from \cite{shannon1949communication}, see also \cite{whittaker1915xviii}, and for the second part see  \cite[Theorem B]{butzer2005classical}.

The above Theorem is often used to expand functions whose Fourier transform decays at infinity, rather than actually being zero outside a bounded interval.  Thus, for example, Rybicki \cite{rybicki1989dawson}
shows by applying the above Theorem that
$$ F(z) = \lim_{h\longrightarrow0+} \frac{1}{\sqrt{\pi}} \sum_{\mbox{$n$ \small odd}} \frac{\exp\{-(z-nh)^2\}}{n},$$
where $F(z)$ happens to be the complex Dawson's integral. We now give a Corollary that generalizes Rybicki's observation:\\
\begin{corollary} Suppose that $f\in L^2(\R)$ is positive definite and continuous.  Then, $f$ is represented as the Fourier transform of some function in $L^1(\R ),$ and
$$ f(x) = \lim_{h\longrightarrow 0+} \sum_{n=-\infty}^{\infty}f\left(nh\right)\frac{\sin(\pi(x/h-n))}{\pi(x/h-n)}$$
where the limit is with respect to the supremum norm.\label{cor:sh}
\end{corollary}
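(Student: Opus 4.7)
The plan is to first establish that $\hat{f}\in L^{1}(\R)$, and then to apply the error estimate in the second part of Theorem~\ref{Shannon} together with dominated convergence. The bulk of the work lies in the first step; everything after is mechanical.

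First I would derive $\hat{f}\in L^{1}(\R)$. Since $f$ is continuous and positive definite, Theorem~\ref{th:bochner} together with the classical Bochner representation theorem produces a finite positive Borel measure $\mu$ on $\R$ with $f(x)=\int_{\R} e^{ix\omega}\,d\mu(\omega)$. Because $f\in L^{2}(\R)$, I would then reinterpret this identity in the sense of tempered distributions: the distributional inverse Fourier transform of $f$ equals $\mu$, but it also equals the $L^{2}$-Fourier transform of $f$, which is a square-integrable function $g$. Hence $\mu$ is absolutely continuous with density $g\in L^{2}(\R)$, and finiteness of $\mu$ gives $g\in L^{1}(\R)$ as well. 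Thus $\hat{f}=g\in L^{1}(\R)\cap L^{2}(\R)$ with $g\ge 0$, consistent with the definition of positive definiteness used in the paper.

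With $\hat{f}\in L^{1}(\R)$ in hand, I would invoke the second part of Theorem~\ref{Shannon}: $f$ is continuous, square-integrable, and represented as the Fourier transform of an $L^{1}$ function, so
\[
\sup_{x\in\R}\,\left|\,f(x)-\sum_{n=-\infty}^{\infty} f(nh)\,\frac{\sin(\pi(x/h-n))}{\pi(x/h-n)}\,\right|\;\le\;\frac{1}{\pi}\int_{|\omega|\ge \pi/h}|\hat{f}(\omega)|\,d\omega.
\]
As $h\to 0^{+}$, the right-hand side tends to zero by dominated convergence, using the dominant $|\hat{f}|\in L^{1}(\R)$ and pointwise convergence of the integrand to zero a.e. on $\R$. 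Since this estimate is uniform in $x$, convergence takes place in the supremum norm, as claimed.

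The main obstacle is the identification $\hat{f}\in L^{1}(\R)$: a priori the working definition of positive definiteness only guarantees that the Fourier transform exists and is non-negative, so the $L^{2}$ hypothesis on $f$ has to be combined with the Bochner representation to pin down the transform as an $L^{1}$ density. Once this is done, everything else is a direct application of Theorem~\ref{Shannon} together with a routine dominated convergence argument.
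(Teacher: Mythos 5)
Your proof is correct and has the same overall skeleton as the paper's: first establish $\hat{f}\in L^{1}(\R)$, then invoke the aliasing error bound from Theorem \ref{Shannon} and let $h\to 0^{+}$ to get uniform convergence. The only real difference is in how the first step is carried out. The paper argues directly from its working definition of positive definiteness: $\hat f$ exists everywhere and is non-negative, Fourier inversion at $x=0$ (justified by continuity of $f$) gives $f(0)=\frac{1}{2\pi}\int\hat f$, and non-negativity turns this into the $L^{1}$-norm of $\hat f$, which is finite because $f(0)$ is. You instead route through the classical Bochner representation, obtaining a finite positive measure $\mu$ with $f=\widehat{\mu}$, and then identify $\mu$ with the $L^{2}$-Fourier transform of $f$ via tempered distributions, so that $\mu$ has a non-negative density $g\in L^{1}\cap L^{2}$. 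Your version is somewhat longer but more careful: the paper's appeal to "the Fourier inversion formula, which holds everywhere since $f$ is continuous" is slightly circular as stated, since pointwise inversion of an $L^{2}$ function ordinarily presupposes that the transform is already integrable (the rigorous form of that step is the standard summability-kernel lemma that a function continuous at the origin with non-negative transform has integrable transform). Your Bochner-plus-distributional-identification argument sidesteps that issue cleanly; the paper's argument is shorter and stays entirely within its own definitions. The second half of your proof (uniformity of the bound in $x$ and the tail of a convergent integral tending to zero) matches the paper's exactly.
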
 \begin{proof}
If $f$ is positive definite, then by definition its Fourier transform $\hat{f}$ is real-valued and non-negative everywhere. By the Fourier inversion formula, which holds everywhere since $f$ is continuous, we then have that $f(0)=\frac{1}{2\pi i} \int_{-\infty}^{\infty} {\hat{f}(\omega)}\,d\omega.$ Since $\hat{f}$ is non-negative, this integral equals the $L_1$-norm of $\hat{f},$ and is finite because the continuity of $f$ insures that $f(0)$ is finite.

Let us consider the Shannon--Whittaker series expansion for some arbitrary $h>0$ :
$$S_{h}(x):=\sum_{-\infty}^{\infty} f\left(nh\right)\frac{\sin\frac{\pi}{h}(x-nh)}{\frac{\pi}{h}(x-nh)} .$$
We see that if $f$ is continuous, square-integrable, and is represented as the Fourier transform of a function $\hat{f}$ in $L^1(\R)$ then
$$|S_h(x)-f(x)|\leq \frac{1}{\pi}\int_{|\omega|\geq \frac{\pi}{h}} | \hat{f}(\omega)|\,d\omega.$$
Taking the limit as $h\longrightarrow0+,$ we obtain uniform convergence.
\end{proof}

We recall the classic form of the Paley-Wiener theorem:

\begin{theorem}
\label{pw}
Suppose $f\in L^2(\mathbb{R})$.  The following are equivalent:

\begin{enumerate}\item[ (i)] The real function $f$ vanishes on $\mathbb{
R}^{-}$;
 \item[ (ii)] The Fourier transform $\hat {f}$ of $f$ extends to a holomorphic function on the upper half-plane and
the $L_2$-norms of the functions $x\mapsto\hat {f}(x+iy_0)$ are
continuous and uniformly bounded for all $y_0\geq 0.$
\end{enumerate}
\end{theorem}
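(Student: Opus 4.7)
The plan is to prove the two implications of the Paley--Wiener theorem separately, using standard complex-analytic machinery combined with Plancherel's theorem.

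For (i) $\Rightarrow$ (ii), I would begin by writing $\hat f(z)=\int_0^\infty f(t)e^{izt}\,dt$ for $z=x+iy$ in the open upper half-plane. Since $|e^{izt}|=e^{-yt}$ is in $L^2(0,\infty)$ whenever $y>0$, Cauchy--Schwarz shows the integral converges absolutely and in fact uniformly on compact subsets of $\{\Im z>0\}$; differentiating under the integral sign (justified by the same estimate applied to $te^{izt}$ on slightly smaller half-planes) yields holomorphicity. For the $L^2$ bound, observe that for each fixed $y>0$ the function $\hat f(\cdot+iy)$ is, by construction, the Fourier transform of $t\mapsto f(t)e^{-yt}\mathbf 1_{[0,\infty)}(t)$. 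Plancherel then gives
\begin{equation*}
\|\hat f(\cdot+iy)\|_2^2 = 2\pi\int_0^\infty |f(t)|^2 e^{-2yt}\,dt \le 2\pi\|f\|_2^2,
\end{equation*}
which is uniform in $y\ge 0$. Continuity in $y$ follows from dominated convergence applied to the right-hand side.

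For (ii) $\Rightarrow$ (i), I would use a contour integration argument. By Fourier inversion in the $L^2$ sense, it suffices to show that for almost every $t<0$,
\begin{equation*}
f(t) \;=\; \frac{1}{2\pi}\int_{-\infty}^{\infty}\hat f(x)e^{-itx}\,dx \;=\;0.
\end{equation*}
Fix $Y>0$ and consider the rectangular contour with vertices $\pm R$, $\pm R+iY$. Since $\hat f$ is holomorphic in the upper half-plane, Cauchy's theorem gives that the integral of $\hat f(z)e^{-itz}$ around this contour vanishes. This identifies the integral along the real line with the integral along the line $\Im z=Y$, up to two vertical side contributions. The latter can be shown to vanish along some sequence $R_k\to\infty$ by a standard averaging/Fubini argument: the uniform $L^2$ bound on horizontal slices implies, by Fubini, that $\int_0^Y|\hat f(x+iy)|^2\,dy$ is integrable in $x$, hence tends to zero along a subsequence. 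Therefore, for $t<0$,
\begin{equation*}
\int_{-\infty}^{\infty}\hat f(x)e^{-itx}\,dx \;=\; e^{tY}\int_{-\infty}^{\infty}\hat f(x+iY)e^{-itx}\,dx,
\end{equation*}
and the right-hand side is bounded in modulus by $e^{tY}\cdot C$ by Cauchy--Schwarz combined with the uniform $L^2$ bound of hypothesis (ii). Letting $Y\to\infty$ and using $t<0$, the right-hand side tends to zero, so $f(t)=0$ for almost every negative $t$.

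The main obstacle, and the only delicate point, is the vanishing of the vertical side contributions in the contour integration; this is where the uniform $L^2$ hypothesis does real work, as opposed to a pointwise decay hypothesis which would make the estimate trivial. Everything else is essentially bookkeeping with Plancherel and Cauchy's theorem.
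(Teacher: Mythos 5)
The paper does not actually prove this statement: it is quoted as ``the classic form of the Paley--Wiener theorem'' and used as a black box, so there is no internal proof to compare against. Judged on its own terms, your implication (i) $\Rightarrow$ (ii) is correct and standard: writing $\hat f(z)=\int_0^\infty f(t)e^{izt}\,dt$, using $e^{-yt}\in L^2(0,\infty)$ for holomorphicity, and reading the uniform bound $\|\hat f(\cdot+iy)\|_2^2=2\pi\int_0^\infty|f(t)|^2e^{-2yt}\,dt\le 2\pi\|f\|_2^2$ off Plancherel is exactly right, and continuity in $y$ does follow by dominated convergence.

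The converse direction, however, has a genuine gap at its final and decisive step. You bound
\begin{equation*}
\Bigl|\int_{-\infty}^{\infty}\hat f(x+iY)e^{-itx}\,dx\Bigr|\le C
\end{equation*}
``by Cauchy--Schwarz combined with the uniform $L^2$ bound,'' but Cauchy--Schwarz requires the other factor $e^{-itx}$ to lie in $L^2(\mathbb{R},dx)$, and a function of modulus one does not; the integral need not even converge absolutely when $\hat f(\cdot+iY)$ is merely in $L^2$. The same issue already infects your starting point, since pointwise Fourier inversion $f(t)=\frac{1}{2\pi}\int\hat f(x)e^{-itx}\,dx$ is not available for general $L^2$ functions. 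The standard repair keeps your contour-shifting skeleton but pairs against a window: for $\phi\in C_c^\infty(-\infty,0)$ supported in $[-b,-a]$ with $a>0$, the function $\psi(z)=\int\overline{\phi(t)}e^{-izt}\,dt$ is entire, rapidly decreasing in $x$ on each horizontal line, and satisfies $|\psi(x+iy)|\lesssim_N e^{-ay}(1+|x|)^{-N}$ for $y\ge0$. Shifting the contour for $\hat f\,\psi$ (your Fubini argument for the vertical sides still works, and is in fact easier here) gives $\bigl|\int_{\mathbb{R}}\hat f\psi\bigr|\le e^{-aY}\|\hat f(\cdot+iY)\|_2\|\psi(\cdot+iY)\|_2\to0$ as $Y\to\infty$, whence $\int f\bar\phi=0$ by Plancherel and $f=0$ a.e.\ on $\mathbb{R}^-$. (A second, minor, point: Cauchy's theorem on a rectangle sitting on the real axis should first be applied on $\Im z=\epsilon>0$ and then $\epsilon\to0^+$ using the assumed $L^2$-continuity of the slices.)
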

\section{Riemann-Hilbert problem for positive definite functions}
Let $L$ be an oriented contour which consists of a finite number of simple smooth closed or open curves situated in an arbitrary way on the plane. The classical additive (multiplicative) homogeneous Riemann-Hilbert problem is to determine a sectionally analytic function $\Phi$ with a jump discontinuity on $L$ and with sections $\Phi_+$ and $\Phi_-$ constituting the boundary values on $L$, which satisfies on $L$ the boundary condition $\Phi_+(t)-\Phi_-(t)=G(t)$ ( $\Phi_+(t)=G(t)\Phi_-(t)$ ) where $G$ is a function satisfying the H\"{o}lder condition everywhere except at a finite number of points where they may have discontinuities of the first kind, and $G(t)\neq 0$ (See \cite[section 44.1]{gakhovboundary}). In the following Theorem, the function $f$ does not need to satisfy the  H\"{o}lder condition.

\begin{theorem}
Suppose that $f$ is in $L^2(\R),$ is positive definite, and continuous. The additive homogeneous Riemann-Hilbert problem for $f$ is solved by
a decomposition  $f_{-}+f_{+}$ where $f_+$ has Fourier transform supported in $[0,\infty)$ and $f_-$ has Fourier transform supported in $(-\infty,0].$ The functions in the decomposition are positive definite.  We furthermore have
 $$f_{+}(w)=\lim_{h\longrightarrow0}\sum_{n=-\infty}^{\infty}f\left(nh\right)\frac{\exp(\pi i(w/h-n))-1}{2\pi i(w/h-n)}$$ and $$f_{-}(w)=\lim_{h\longrightarrow0}\sum_{n=-\infty}^{\infty}f\left(nh\right)\frac{1-\exp(-i\pi(w/h-n))}{2\pi i(w/h-n)}.$$ \label{th:additive.RH}
\end{theorem}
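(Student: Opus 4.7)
The plan is to build $f_\pm$ directly via the Fourier transform and then derive the explicit formulas by specializing the Shannon--Whittaker expansion. Since $f\in L^2(\R)$ is positive definite, $\hat f$ is real and non-negative almost everywhere; continuity of $f$ at $0$, together with Fourier inversion, forces $\hat f\in L^1(\R)$ (as is already observed in the proof of Corollary \ref{cor:sh}). I set $\hat f_+ := \hat f\cdot \chi_{[0,\infty)}$ and $\hat f_- := \hat f\cdot \chi_{(-\infty,0]}$, and define $f_\pm$ as the inverse Fourier transforms of $\hat f_\pm$. Each $\hat f_\pm$ remains non-negative, so $f_\pm$ is positive definite by definition; and $\hat f_+ + \hat f_- = \hat f$ gives $f = f_+ + f_-$ after inversion.

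To verify that this decomposition solves the additive homogeneous Riemann--Hilbert problem, invoke the Paley--Wiener theorem (Theorem \ref{pw}): since $\hat f_+$ is supported in $[0,\infty)$, the function $f_+$ extends to a holomorphic function on the upper half-plane, and symmetrically $f_-$ extends holomorphically to the lower half-plane. Setting $\Phi(z):=f_+(z)$ for $\Im z>0$ and $\Phi(z):=-f_-(z)$ for $\Im z<0$ produces a sectionally analytic function whose boundary jump across $\R$ equals $f$.

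For the explicit series, I apply Corollary \ref{cor:sh} to $f$, obtaining
\[
f(w)=\lim_{h\to 0^+}\sum_{n=-\infty}^{\infty} f(nh)\,\frac{\sin\pi(w/h-n)}{\pi(w/h-n)}
\]
uniformly in $w$, and I split each sinc kernel by the elementary identity
\[
\frac{\sin\pi z}{\pi z}=\frac{e^{i\pi z}-1}{2\pi i z}+\frac{1-e^{-i\pi z}}{2\pi i z}\qquad (z=w/h-n).
\]
A direct Fourier computation shows that, as a function of $w$, the first summand has distributional Fourier transform supported in $[0,\pi/h]$ and the second in $[-\pi/h,0]$; summing the two families over $n$ yields precisely the formulas claimed for $f_+$ and $f_-$. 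As $h\to 0^+$, the corresponding bandlimited ``halves'' of $\hat f$ converge to $\hat f\cdot \chi_{[0,\infty)}$ and $\hat f\cdot \chi_{(-\infty,0]}$ in $L^1(\R)$, and Fourier inversion then gives uniform convergence of each half-series to the desired limit.

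The main obstacle is this last step: justifying that the splitting of the sinc kernel at scale $h$ corresponds in the limit to the Fourier cutoff at $\omega=0$, and that each half-series converges uniformly to $f_\pm$ rather than only in some weaker sense. The tail estimate in Theorem \ref{Shannon} controls the error of the full sinc series uniformly in $w$; the subtler point is that, because $\hat f$ is non-negative and $L^1$, the same one-sided tail integral majorizes the error in each of the two half-series separately. A dominated-convergence argument then closes the proof.
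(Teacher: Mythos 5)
Your proposal is correct and follows essentially the same route as the paper: define $f_\pm$ by restricting $\hat f$ to the half-lines (using $\hat f\geq 0$ and $\hat f\in L^1$ from Corollary \ref{cor:sh}), get positive definiteness immediately, invoke Paley--Wiener for the sectional analyticity, and obtain the series from the one-sided sampling kernels. Your splitting of the sinc kernel via $\frac{\sin\pi z}{\pi z}=\frac{e^{i\pi z}-1}{2\pi i z}+\frac{1-e^{-i\pi z}}{2\pi i z}$ is just a more explicit rendering of what the paper dispatches as ``a slight generalization of the second part of Corollary \ref{cor:sh} to functions supported on $[0,\pi/h]$ or $[-\pi/h,0]$,'' and your tail-estimate argument for the uniform convergence of each half-series is the right way to fill in that step.
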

\begin{proof}  We can decompose uniquely so that $f=f_{-}+f_{+}$ where the Fourier transform of $f_{+}$ is the restriction of the Fourier transform of $f$ to $\R^+ ,$ and similarly for $f_-$. If $f$ is positive definite, then clearly so are $f_+$ and $f_-.$ Theorem \ref{pw} shows that the holomorphicity conditions needed for the additive Riemann-Hilbert problem are satisfied by $f_+$ and $f_-$.  We observe that $f_{\pm}$ is in fact continuous: this is because the first part of Corollary \ref{cor:sh} shows that the Fourier transform $\hat{f}$ of $f$ is in $L^1(\R).$ The same is then true for the Fourier transforms of $f_{+}$ and $f_{-}$, and the Riemann--Lebesgue lemma then implies that $f_{+}$ and $f_{-}$ are continuous. We now consider the boundedness of $f_{+}$ in the upper half plane. Since $f_{+}$ is continuous and positive definite, the first part of Corollary \ref{cor:sh} shown that $f_{+}(x)=\int_0^\infty \hat{f}(\omega) e^{i\omega x}\,d\omega, $ which converges for all $x$ in the upper half plane. But since $| e^{i\omega x} | \leq1$ for all real $\omega$ and all $x$ in the upper half plane, we see that $|f_{+}(x)|\leq \int_0^\infty |\hat{f}(\omega)|\,d\omega.$
 The series decompositions come from a slight generalization of the second part of Corollary \ref{cor:sh} to the case of functions supported on $[0,\pi/h]$ or $[-\pi/h,0].$
\end{proof}

\begin{proposition}
Suppose that $f:\mathbb{R}\longrightarrow \mathbb{C}$ is positive definite, has Fourier transform supported on $[-\sigma,\sigma]$ where $0<\sigma<\pi$ and
\begin{equation*}
\lim_{n\longrightarrow\pm\infty}\frac{\Re(f(n))}{n^2}=\lim_{n\longrightarrow\pm\infty}\frac{\Im(f(n))}{n}=0
\end{equation*}

The function $f$ has a decomposition $f_{-}+f_{+}$ where $f_+$ has Fourier transform supported in $[0,\sigma]$ and $f_-$ has Fourier transform supported in $[-\sigma,0].$ The functions in the decomposition are positive definite.  We furthermore have
 $$f_{+}(w)=\sum_{n=-\infty}^{\infty}f\left(n\right)\frac{\exp(\pi i(w-n))-1}{2\pi i(w-n)}$$ and $$f_{-}(w)=\sum_{n=-\infty}^{\infty}f\left(n\right)\frac{1-\exp(-i\pi(w-n))}{2\pi i(w-n)}.$$

 where the convergence is uniform on compact subsets of $\mathbb{R}$.
\end{proposition}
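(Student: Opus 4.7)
My plan is to recognize this as the bandlimited specialization of Theorem \ref{th:additive.RH}: since the Fourier transform of $f$ is supported in $[-\sigma,\sigma]\subset(-\pi,\pi)$, Shannon--Whittaker sampling (Theorem \ref{Shannon}) applies with zero aliasing error at sample spacing $h=1$, so the limit $h\to 0+$ in the formulas of Theorem \ref{th:additive.RH} collapses to a plain equality. What remains is to promote the mode of convergence to uniform-on-compacts convergence of the fixed-$h=1$ series.

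First, by Bochner's theorem, write $f(x)=\int_{-\sigma}^{\sigma} e^{i\omega x}\,d\mu(\omega)$ for a finite positive Borel measure $\mu$, and set $f_+(w):=\int_{[0,\sigma]} e^{i\omega w}\,d\mu(\omega)$ and $f_-(w):=\int_{[-\sigma,0)} e^{i\omega w}\,d\mu(\omega)$. Each $f_\pm$ is the Fourier transform of a positive measure, hence positive definite by Bochner, with Fourier transform supported in the stated half-interval, and $f=f_++f_-$. Split the sinc kernel into its positive- and negative-frequency halves,
\[
\frac{\sin\pi(w-n)}{\pi(w-n)} \;=\; \frac{e^{i\pi(w-n)}-1}{2\pi i(w-n)} \;+\; \frac{1-e^{-i\pi(w-n)}}{2\pi i(w-n)} \;=:\; K_+(w,n)+K_-(w,n),
\]
noting that $K_\pm(\cdot,n)$ is the inverse Fourier transform of $\frac{1}{2\pi}\mathbf{1}_{[0,\pi]}(\omega)e^{-i\omega n}$ (respectively on $[-\pi,0]$).

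The core step is to establish $\sum_n f(n) K_+(w,n) = f_+(w)$ uniformly on compacts, and symmetrically for $f_-$. Substituting the Bochner representation into the symmetric partial sum $S_N(w)=\sum_{|n|\leq N} f(n) K_+(w,n)$ and interchanging the finite sum with the integral gives $S_N(w)=\int_{-\sigma}^{\sigma} D_N(w,\omega)\,d\mu(\omega)$, where a short computation yields
\[
D_N(w,\omega) \;:=\; \sum_{|n|\leq N} e^{i\omega n} K_+(w,n) \;=\; \frac{1}{2\pi}\int_0^\pi e^{i\eta w}\,\mathcal{D}_N(\omega-\eta)\,d\eta
\]
with $\mathcal{D}_N$ the Dirichlet kernel. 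Standard Fourier-series theory then gives $D_N(w,\omega)\to \mathbf{1}_{[0,\pi]}(\omega)\,e^{i\omega w}$ uniformly for $(w,\omega)$ in compact subsets of $\R\times((-\pi,\pi)\setminus\{0\})$, with a uniform pointwise bound. Since $\sigma<\pi$, the only endpoint ambiguity is at $\omega=0$, a single point handled by absorbing the atom $\mu(\{0\})$ into $f_+$. Bounded convergence applied to the finite measure $\mu$ then gives $S_N(w)\to f_+(w)$ uniformly in $w$ on compacts.

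The main obstacle is the conditional nature of the convergence: since $K_\pm(w,n)=O(1/|n|)$ the series is not absolutely summable, so one must work with symmetric partial sums. Pairing $n$ with $-n$ using $f(-n)=\overline{f(n)}$ yields
\[
f(n)K_+(w,n)+f(-n)K_+(w,-n) \;=\; \frac{(-1)^n e^{i\pi w}-1}{2\pi i}\left[\frac{2w\,\Re f(n)}{w^2-n^2} + \frac{2in\,\Im f(n)}{w^2-n^2}\right],
\]
whose two summands are $O(\Re f(n)/n^2)$ and $O(\Im f(n)/n)$ respectively. The hypothesized decay rates are precisely what is needed to guarantee convergence of the symmetric partial sums uniformly in $w$ on compacts, matching the conclusion obtained via the Dirichlet-kernel route above and completing the proof.
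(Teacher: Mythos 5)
Your Dirichlet-kernel argument is correct in substance, but it is a genuinely different proof from the one in the paper. The paper disposes of the convergence claim in one line by citing Theorem~1 of Bailey--Madych (\emph{Cardinal sine series, oversampling, and periodic distributions}), whose hypotheses on the samples of the even and odd parts are exactly the stated conditions $\Re(f(n))=o(n^2)$, $\Im(f(n))=o(n)$; the only content of the paper's proof is the observation that $f(-x)=\overline{f(x)}$ identifies the even and odd parts of $f$ with $\Re f$ and $i\Im f$ at the sample points. You instead give a self-contained proof: Bochner representation by a finite positive measure, the splitting of the sinc kernel into its positive- and negative-frequency halves, the identification of the symmetric partial sums with Dirichlet-kernel means $\frac{1}{2\pi}\int_0^\pi e^{i\eta w}\mathcal{D}_N(\omega-\eta)\,d\eta$, and bounded convergence against $\mu$. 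This buys transparency --- in particular it makes visible that the real engine is the oversampling $\sigma<\pi$, which keeps the support of $\mu$ away from the jump of $\mathbf{1}_{[0,\pi]}$ at $\eta=\pm\pi$ --- at the cost of using positive definiteness (boundedness of $f$, finiteness of $\mu$) more heavily than the cited theorem, which tolerates genuinely growing samples.

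Two points need repair. First, if $\mu$ had an atom at $0$, the Dirichlet partial sums converge to the half-value there, so the series limit assigns $\tfrac12\mu(\{0\})$ to each of $f_\pm$; you should define $f_\pm$ accordingly rather than ``absorb the atom into $f_+$'', or simply note that under the paper's definition (where $\hat f$ is a non-negative \emph{function}) there is no atom and the issue is vacuous. Second, your closing paragraph is not a valid convergence argument: the hypotheses only make the paired terms $O(\Re f(n)/n^2)+O(\Im f(n)/n)=o(1)$, and a series of $o(1)$ terms need not converge --- indeed even for bounded $f$ the paired terms are only $O(1/n)$, so no termwise bound can work and the oversampling/cancellation captured by your Dirichlet-kernel step is genuinely needed. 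Since that step already establishes uniform convergence of the symmetric partial sums, the last paragraph should be demoted to a remark on why the sum must be taken symmetrically (and on why the stated decay hypotheses are automatic for a bounded positive definite $f$), not presented as what ``completes the proof''.
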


\begin{proof}
The statements on uniform convergence follow from \cite[Theorem 1]{bailey2015cardinal}. Since $f$ is positive definite, $f(-x)=\overline{f(x)}$ implying that $f_e(x)=\frac{f(x)+f(-x)}{2}=\Re(f(x))$ and $f_o(x)=\frac{f(x)-f(-x)}{2}=i\Im(f(x))$.
\end{proof}

\begin{theorem}\label{main}  Suppose that $f$ is a function on the real line having the properties:
\begin{enumerate}[i)]
	\item $f\in L^2(\R)+\C 1,$ \item $|f|$ is bounded and bounded away from zero, \item and $f^\lambda$ is positive definite for all $\lambda>0$.
\end{enumerate} Then $f=\Phi_{+}\Phi_-$ where $\Phi_+$ is positive definite on $\R$ and extends to a bounded holomorphic function on the upper half plane, and $\Phi_-$ is positive definite on $\R$ and extends to a bounded holomorphic function on the lower half plane.
\end{theorem}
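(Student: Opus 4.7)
The plan is to reduce the multiplicative Wiener--Hopf-type factorization to the additive Riemann--Hilbert decomposition of Theorem \ref{th:additive.RH} by taking logarithms. I would start by applying the second part of Theorem \ref{th:positivedef} to write $\log f = c + h$ for some constant $c$ and some positive definite function $h$. Because $|f|$ is bounded away from zero, a continuous single-valued branch of $\log f$ is available; evaluation at the origin then shows that $e^c$ is a positive real, since both $f(0) > 0$ and $e^{h(0)} > 0$.

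Next I would pin down the constant $c$ by matching $f = e^c e^h$ against the unique decomposition $f = f_1 + \alpha$ coming from $f \in L^2(\R) + \C 1$. Since $f$ is positive definite the coefficient $\alpha$ is a non-negative real; applying the Riemann--Lebesgue lemma to $\hat{f_1} \in L^1(\R)$ (which holds because $f_1$ is positive definite with $\hat{f_1} \ge 0$ and $\int \hat{f_1} = 2\pi f_1(0) < \infty$) forces $f(x) \to \alpha$ at infinity, and the bound $|f|$ away from zero then gives $\alpha > 0$. A Taylor expansion shows that $h = \log(f/\alpha) = \log(1 + f_1/\alpha)$ lies in $L^2(\R)$, because $|f_1/\alpha|$ is bounded and $|\log(1+z)| \le C|z|$ on any set that is bounded and bounded away from $-1$. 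Hence $c = \log \alpha$ is real, and $h$ is positive definite, continuous, and square-integrable.

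I would then apply Theorem \ref{th:additive.RH} directly to $h$, obtaining $h = h_+ + h_-$ with $h_\pm$ positive definite, $\hat{h}_+$ supported in $[0,\infty)$ and $\hat{h}_-$ supported in $(-\infty,0]$, and each $h_\pm$ extending to a bounded holomorphic function on the corresponding half plane (the boundedness coming exactly as in the proof of Theorem \ref{th:additive.RH}, via $|e^{i\omega z}| \le 1$ and the $L^1$ bound on $\hat{h}$). Setting $\Phi_\pm := e^{c/2} \exp(h_\pm)$, the first part of Theorem \ref{th:positivedef} gives that $\exp(h_\pm)$ is positive definite, and multiplying by the positive real scalar $e^{c/2}$ preserves positive definiteness. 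By construction $\Phi_+ \Phi_- = e^c e^{h_+ + h_-} = e^c e^h = f$; each $\exp(h_\pm)$ inherits the holomorphic extension to the respective half plane, and $|\exp(h_\pm)| = \exp(\Re h_\pm) \le \exp(|h_\pm|)$ is bounded because $h_\pm$ itself is bounded there.

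I expect the main obstacle to be the careful bookkeeping in the second step: the constant in Theorem \ref{th:positivedef} is not uniquely determined by that statement alone, so one must exploit the extra $L^2 + \C 1$ information on $f$ to single out the additive constant for which $\log f - c$ lies in the $L^2$ class required by Theorem \ref{th:additive.RH}. A closely related subtlety is the choice of a single-valued continuous branch of $\log f$; this should follow from $|f|$ being bounded away from zero together with the limiting behaviour $f(x) \to \alpha > 0$ at infinity, but deserves explicit verification.
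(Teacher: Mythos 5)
Your proposal follows essentially the same route as the paper: take $\log f$, use the second part of Theorem \ref{th:positivedef} to see it differs from a positive definite function by a constant, check it lies in $L^2(\R)+\C 1$, apply the additive decomposition of Theorem \ref{th:additive.RH}, and exponentiate, using the first part of Theorem \ref{th:positivedef} to preserve positive definiteness. Your second step merely spells out the detail (identifying $c=\log\alpha$ and verifying $\log(1+f_1/\alpha)\in L^2$) that the paper dismisses as ``straightforward,'' so the argument is correct and matches the paper's.
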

\begin{proof} By the second and third hypotheses, $\log(f)$ differs by a constant from a positive definite function (Theorem \ref{th:positivedef}). It is straightforward to show that the first two hypotheses imply $\log(f)$ is in $L^2(\R)+\C 1.$ The solution to the additive Riemann-Hilbert problem (Theorem \ref{th:additive.RH}) then implies that $$\log(f)=g_{+}+g_{-}+\lambda,$$ where $\lambda$ is a constant, $g_+$ is holomorphic and bounded in the upper half plane, and $g_-$ is holomorphic and bounded in the lower half plane. Exponentiating, we notice that since the exponential function is entire, $\exp(g_{+}+\frac{\lambda}{2})$ is therefore holomorphic on the upper half plane and $\exp(g_{-}+\frac{\lambda}{2})$ is holomorphic on the lower half plane. Boundedness in these domains follows from the boundedness of $g_{\pm}.$ Furthermore, since $g_\pm$ was positive definite on $\R,$  the first part of Theorem \ref{th:positivedef} implies that $\Phi_{\pm}:=\exp(g_{\pm}+\frac{\lambda}{2})$ is positive definite on $\R.$ Moreover, $\Phi_+$ and $\Phi_-$ have no zeros in the upper and lower half planes respectively because $g_+$ and $g_-$ have no poles in the upper and lower half planes respectively.
\end{proof}

\begin{remark}
The general inhomogeneous problem can then be solved by combining the homogeneous multiplicative problem with the homogeneous additive problem.
\end{remark}

\begin{corollary}\label{cor:factorization}
If function $f$ satisfies the assumptions of Theorem \eqref{main}, then $f$ is positive definite if and only if there exist positive definite functions $\Phi_+$ and $\Phi_-$ such that $f=\Phi_+\Phi_-$.
\end{corollary}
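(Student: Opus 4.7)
The plan is to handle the two directions of the biconditional separately, both of which reduce to invoking results already established in the paper. The corollary is really just a packaging of Theorem \ref{main} together with the classical Bochner--Schur product principle, so I do not expect serious obstacles.

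For the forward direction, I would observe that the hypotheses of Theorem \ref{main} are assumed to hold for $f$; in particular condition (iii) says $f^{\lambda}$ is positive definite for all $\lambda>0$, and specializing to $\lambda=1$ this forces $f$ itself to be positive definite. Thus the ``$f$ positive definite'' side of the iff is automatic under the standing hypotheses, and Theorem \ref{main} immediately produces the sought factorization $f=\Phi_+\Phi_-$ with $\Phi_+,\Phi_-$ positive definite (and, in fact, enjoying additional half-plane holomorphicity properties, although those are not needed for the statement of the corollary).

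For the reverse direction, suppose $f=\Phi_+\Phi_-$ with $\Phi_+$ and $\Phi_-$ positive definite. I would invoke Bochner's theorem (Theorem \ref{th:bochner}) on each factor separately: for any finite set $\{x_i\}\subset\R$, both matrices $[\Phi_+(x_i-x_j)]$ and $[\Phi_-(x_i-x_j)]$ are positive semi-definite. Schur's product theorem (Theorem \ref{th:schur}) then shows that their entrywise product $[\Phi_+(x_i-x_j)\Phi_-(x_i-x_j)]=[f(x_i-x_j)]$ is positive semi-definite. A second application of Bochner's theorem, in the opposite direction, concludes that $f$ is positive definite.

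There is essentially no hard step; the only thing to check is that the Hadamard product of the two Bochner matrices really is the matrix of values of $f$, which holds because $\Phi_+$ and $\Phi_-$ multiply pointwise to $f$. The corollary is therefore a short consequence of Theorem \ref{main} combined with Bochner plus Schur, and the main work of the section has already been done in proving Theorem \ref{main}.
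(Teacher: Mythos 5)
Your proof is correct and follows essentially the same route as the paper: the forward direction is Theorem \ref{main}, and the reverse direction is the closure of positive definite functions under pointwise products, which you justify via Bochner (Theorem \ref{th:bochner}) and Schur (Theorem \ref{th:schur}) where the paper simply asserts it. Your added observation that hypothesis (iii) with $\lambda=1$ already makes $f$ positive definite is accurate and harmless.
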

\begin{proof}
By Theorem \eqref{main}, if $f$ is positive definite then there exist positive definite functions $\Phi_+$ and $\Phi_-$ such that $f=\Phi_+\Phi_-$. Its converse holds because product of positive definite functions is positive definite.
\end{proof}
In the proof of Theorem \eqref{main}, the expression $\log(f)$ becomes singular if $f$ vanishes at infinity on the real line. Nevertheless, we can handle this case by perturbing the function $f$ to $\tilde{f}=f+\epsilon$ for some $\epsilon>0$. It is easy to see that the perturbation $\tilde{f}$ also satisfies the assumptions $(ii)$ and $(iii)$ of Theorem \eqref{main}. The following Lemma shows that the process of exponentiating and anti-exponentiating which are used in constructing the solution are well-behaved with respect to approximation. 

\begin{lemma}
 If $x$ and $y$ are two points in $G=\{z\in \mathbb{C}: N<|z|<M\}$ such that the segment joining them is also in $G$ then  

\begin{equation*}
\exp(N)|x-y|<|\exp(x)-\exp(y)|<\exp(M)|x-y|
\end{equation*}




\end{lemma}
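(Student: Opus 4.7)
The natural plan is to parametrize the segment from $y$ to $x$ as $z(t) = y + t(x-y)$ for $t \in [0,1]$ and apply the fundamental theorem of calculus to the entire function $\exp$:
\begin{equation*}
\exp(x) - \exp(y) = (x-y)\int_0^1 \exp\bigl(y + t(x-y)\bigr)\,dt.
\end{equation*}
Since the segment lies in $G$, every point $z(t)$ satisfies $N < |z(t)| < M$. The lemma will then follow from two-sided control of $|\exp(z(t))|$ on this segment, combined with the triangle inequality for integrals on the upper side and a mean-value-type argument on the lower side.

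The upper bound is the easy half. Using $|\exp(z)| = \exp(\Re z) \leq \exp(|z|)$ and the hypothesis $|z(t)| < M$ on the segment, the integrand is bounded in modulus by $\exp(M)$, so the triangle inequality yields
\begin{equation*}
|\exp(x) - \exp(y)| \leq |x-y|\int_0^1 \exp(\Re z(t))\,dt < \exp(M)\,|x-y|.
\end{equation*}

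The lower bound is where I expect the real difficulty, and in fact it is where the statement of the lemma looks suspect. One would like to say $|\exp(z(t))| \geq \exp(N)$ on the segment and then conclude $|\exp(x)-\exp(y)| \geq \exp(N)|x-y|$, but two separate issues arise. First, $|z(t)| > N$ controls only $|z(t)|$, not $\Re z(t)$, so $|\exp(z(t))| = \exp(\Re z(t))$ can be as small as $\exp(-M)$; second, even if we had $|\exp(z(t))| \geq \exp(N)$ pointwise, the modulus of a complex-valued integral is not bounded below by the infimum of the integrand's modulus, because of cancellation (take $x,y$ symmetric about $0$ on the imaginary axis to get an integral that nearly vanishes while $|x-y|$ is large).

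My best guess is therefore that the lemma is intended to be applied where $x,y$ are real (or where $z(t)$ is constrained to lie in a half-plane where $\Re z(t) > N$ and the segment is short enough to prevent cancellation), so that $G$ plays the role of an interval on the real axis. Under that reading the proof reduces to the one-variable mean value theorem: there exists $\xi$ on the segment with $\exp(x)-\exp(y) = \exp(\xi)(x-y)$, and $\exp(\xi) \in (\exp(N),\exp(M))$ since $|\xi| \in (N,M)$. I would therefore begin the write-up by either adding the missing hypothesis explicitly, or by replacing the lower bound with $\exp(-M)|x-y|$ (the crude bound that the complex argument actually supports), and then carry out the MVT/FTC argument sketched above.
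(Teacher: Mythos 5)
Your diagnosis is correct, and your integral representation $\exp(x)-\exp(y)=(x-y)\int_0^1\exp\bigl(y+t(x-y)\bigr)\,dt$ is just the continuous version of what the paper actually does: it cites McLeod's mean value theorem to write $\exp(x)-\exp(y)=(x-y)(\lambda_1\exp(w_1)+\lambda_2\exp(w_2))$ with $w_1,w_2$ on the segment, $\lambda_1,\lambda_2\geq 0$ and $\lambda_1+\lambda_2=1$, and then simply asserts both inequalities with no further argument. The upper bound goes through in either formulation exactly as you argue, since $|\exp(w)|=\exp(\Re w)\leq\exp(|w|)<\exp(M)$ on the segment. The lower bound does not, for precisely the two reasons you name: the hypothesis $N<|w_i|$ controls the modulus, not $\Re w_i$, so $|\exp(w_i)|$ can be as small as $\exp(-M)$; and a convex combination (or integral) of complex numbers each of modulus at least $\exp(N)$ can still be arbitrarily small by cancellation. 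A concrete counterexample: take $N=1$, $M=10$, $x=2+i\pi$, $y=2-i\pi$. Every point of the segment has modulus in $[2,\sqrt{4+\pi^2}\,]\subset(1,10)$, yet $\exp(x)-\exp(y)=e^2(e^{i\pi}-e^{-i\pi})=0$ while $|x-y|=2\pi$. So the lemma's lower bound is false as stated, and the paper's own proof does not establish it.

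One correction to your proposed repair: the same counterexample shows that no lower bound of the form $c\,|x-y|$ with $c>0$ --- in particular $\exp(-M)|x-y|$ --- can hold in the complex annulus, so the only bound the complex argument ``actually supports'' is the upper one, i.e.\ that $\exp$ is Lipschitz with constant $\exp(M)$ on convex subsets of $\{z:|z|<M\}$. A valid lower bound needs both a hypothesis forcing $\Re z>N$ along the segment and something that rules out cancellation; your suggestion of restricting to real $x,y$ with $N<x,y<M$, where the one-variable mean value theorem gives $\exp(x)-\exp(y)=\exp(\xi)(x-y)$ with $\xi\in(N,M)$, is the cleanest fix and is almost certainly what the paper intends, since the lemma is invoked only to say that exponentiation and its inverse behave well under perturbation.
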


\begin{proof}
By \cite[Theorem 10]{mcleod1965mean}, $\exp(x)-\exp(y)=(x-y)(\lambda_1\exp(w_1)+\lambda_2\exp(w_2))$ where $\lambda_1$, $\lambda_2\geq 0$, and $\lambda_1+\lambda_2=1$ and $w_1$, $w_2$ are on the segment joining $x$ and $y$. Therefore, we conclude

\begin{equation*}
\exp(N)|x-y|<|\exp(x)-\exp(y)|<\exp(M)|x-y|
\end{equation*}


\end{proof}
 \section{An example: Distribution of Extrema of a Class of L\'{e}vy Processes}

We thus have a theory that allows us to construct positive definite solutions to Riemann-Hilbert problems in a very simple and explicit form, well-suited to numerical computation when needed. Since positive definite functions are, up to a normalization, the same as the characteristic functions that appear in statistics, we seek statistical applications. We note that in the case of characteristic functions, the Riemann-Lebesgue lemma implies that they are in $C_0(\mathbb{R})$, and thus the usual uniqueness theory based on the index for Riemann-Hilbert problems is not directly applicable. In fact, the index of a function in $C_0(\mathbb{R})$ is undefined (see \cite{gakhovboundary}, however, for some special cases). The usual definition of the index is as follows:

 \begin{definition}\cite[Definition 12.1]{gakhovboundary}
Let $L$ be a smooth closed contour and $G(t)$ a continuous non-vanishing function given on $L$. By the index of the function $G(t)$ with respect to the contour $L$ we understand the increment of its argument, in traversing the curve in the positive direction, divided by $2\pi$.\label{def:index}
\end{definition}


 We will apply the theory discussed in the previous section to the characteristic function of a L\'{e}vy process. We define a L\'{e}vy process $X$ as in \cite[I. Definition 1]{bertoin1996levy}. The function $\psi:\mathbb{R}\longrightarrow \mathbb{C}$ in $\phi(\lambda)=
 \mathbb{E}[\exp(i \lambda X_t)]=\exp(-t\psi(\lambda))$ where $t\geq0$, is called the characteristic exponent of the L\'{e}vy process $X$ and $\phi$ is the characteristic function which has numerous properties such as uniform continuity and positive definiteness. Let $\tau=\tau(q)$ be an exponentially distributed random time with parameter $q>0$ which is independent of the L\'{e}vy process $X$. The characteristic function of $X_{\tau}$ is $\frac{q}{q+\psi(\lambda)}$, see \cite[p.165]{bertoin1996levy}.

\begin{proposition}\label{posdef}
If $\phi$, $\psi$, $\mu$, and $\sigma$ are the characteristic function, characteristic exponent, drift, and volatility of a L\'{e}vy process respectively and $\alpha\in(0,1)$, then the following hold:

\begin{enumerate}[i)]
\item If $t\neq0$ and $\sigma\neq 0$ then $\phi\in L^2(\R)\cap L^1(\R)+\C 1$ and if $\displaystyle\lim_{\lambda\rightarrow \pm\infty}\psi(\lambda)=\displaystyle\lim_{\lambda\rightarrow \pm\infty}\lambda^2$ then $\frac{q}{q+\psi}\in L^2(\R)\cap L^1(\R)$,
\item $|\phi|$ and $|\frac{q}{q+\psi}+\alpha|$ are bounded and bounded away from zero,
\item $(\frac{q}{q+\psi})^\beta$ is positive definite for all $\beta\in\mathbb{R}^+$,
\item $\Index(\phi+\alpha+1)=\Index(\frac{q}{q+\psi}+\alpha)=\Index(\psi+\alpha)=0$.

\end{enumerate}
\end{proposition}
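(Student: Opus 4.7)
The plan is to reduce everything to properties of the characteristic exponent $\psi$ via the Lévy-Khintchine representation
\[
\psi(\lambda)=-i\mu\lambda+\tfrac{1}{2}\sigma^{2}\lambda^{2}+\int_{\R}\bigl(1-e^{i\lambda x}+i\lambda x\mathbf{1}_{|x|<1}\bigr)\,\nu(dx),
\]
together with the consequence $\Re\psi(\lambda)\geq 0$ (which is equivalent to $|\phi(\lambda)|\leq 1$). For part (i), I would extract from this representation the bound $\Re\psi(\lambda)\geq \tfrac{1}{2}\sigma^{2}\lambda^{2}$ and deduce that $|\phi(\lambda)|=\exp(-t\Re\psi(\lambda))$ has Gaussian decay whenever $t\sigma\neq 0$; this places $\phi$ in $L^{1}(\R)\cap L^{2}(\R)$, hence in the enlarged space $L^{1}\cap L^{2}+\C1$. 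Under the asymptotic assumption $\psi(\lambda)\sim\lambda^{2}$, the resolvent $q/(q+\psi)$ decays like $q/\lambda^{2}$, which is already enough to place it in $L^{1}\cap L^{2}$.

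Part (ii) is elementary once one notes that $\Re(q+\psi(\lambda))\geq q>0$ for every $\lambda\in\R$; because $\Re(1/z)>0$ whenever $\Re z>0$, the image of $q/(q+\psi)$ lies strictly in the right half plane, so $\Re(q/(q+\psi)+\alpha)\geq\alpha$, giving both boundedness and the bounded-away-from-zero conclusion. The corresponding claim about $\phi$ follows from $|\phi|\leq 1$ for boundedness, while for the nonvanishing of the relevant shifted function one applies the same remark to $\phi+\alpha+1$ (which is what actually enters (iv)), observing that $|\phi+\alpha+1|\geq \alpha$.

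For part (iii), the natural route is Gamma-subordination. Since $q^{\beta}s^{\beta-1}e^{-qs}/\Gamma(\beta)$ is the density of a Gamma$(\beta,q)$ random variable $T_{\beta}$, I would write
\[
\Bigl(\frac{q}{q+\psi(\lambda)}\Bigr)^{\beta}=\frac{q^{\beta}}{\Gamma(\beta)}\int_{0}^{\infty}s^{\beta-1}e^{-qs}\,e^{-s\psi(\lambda)}\,ds,
\]
and recognise the right-hand side as the characteristic function of $X_{T_{\beta}}$: it is a non-negatively weighted integral mixture of the positive definite functions $\lambda\mapsto e^{-s\psi(\lambda)}$, and stability of positive definiteness under such integration gives the claim.

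Part (iv) is then purely geometric. By the previous step, each of $\psi+\alpha$, $q/(q+\psi)+\alpha$, and $\phi+\alpha+1$ is continuous and takes values in the open right half plane (for the first, $\Re(\psi+\alpha)\geq\alpha$; the other two were just handled). Viewed on the one-point compactification $\R\cup\{\infty\}$ each of these functions extends continuously to a loop contained in a half plane, so its total change of argument is zero and the index vanishes. I expect the only delicate point to be part (iii), where the interchange of the integral against $s^{\beta-1}e^{-qs}$ with the Bochner quadratic form $\sum c_{i}\bar c_{j}(\cdot)(x_{i}-x_{j})$ has to be justified; this is the lone nontrivial analytic input, and everything else reduces to direct estimates on $\psi$.
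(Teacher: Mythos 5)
Your proposal follows essentially the same route as the paper on every part: the L\'evy--Khintchine bound $\Re\psi(\lambda)\geq\tfrac12\sigma^2\lambda^2$ giving Gaussian decay of $|\phi|$ for (i), the Gamma-mixture identity $\bigl(\tfrac{q}{q+\psi}\bigr)^{\beta}=\int_0^{\infty}\tfrac{x^{\beta-1}e^{-x}}{\Gamma(\beta)}e^{-(x/q)\psi}\,dx$ for (iii), and the observation that each shifted function takes values in the right half plane for (iv) (the paper verifies $\Re\bigl(\tfrac{q}{q+\psi}+\alpha\bigr)\geq\alpha$ by an explicit computation of the real part, where you invoke $\Re(1/z)>0$ for $\Re z>0$; these are the same fact). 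The one place you diverge is part (ii): you prove only that $|\phi|\leq 1$ and then deliberately shift attention to the nonvanishing of $\phi+\alpha+1$, whereas the statement asserts that $|\phi|$ itself is bounded away from zero. Your caution is warranted --- under the hypotheses of part (i) (namely $t\sigma\neq 0$) one has $|\phi(\lambda)|\leq e^{-\frac12 t\sigma^2\lambda^2}\to 0$, so the literal claim in (ii) cannot hold simultaneously with (i), and the paper's own justification (``since $0<|\phi|\leq 1$, $\phi$ is bounded and bounded away from zero'') is a non sequitur: pointwise positivity does not give a uniform lower bound. Your reading, that what is actually needed downstream is the nonvanishing of the shifted function $\phi+\alpha+1$, is the defensible version of the statement.
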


\begin{proof}
$i)$ By the L\'{e}vy-Khintchine formula,
\begin{equation*}
\psi(\lambda)=\displaystyle\int_{-\infty}^{\infty}(1-\cos(x\lambda))\nu(dx)+\frac{1}{2}\sigma^2\lambda^2+i(\displaystyle\int_{-1}^{1}x\lambda\nu(dx)-\displaystyle\int_{-\infty}^{\infty}\sin(x\lambda)\nu(dx)-\mu\lambda)
\end{equation*}
where $\nu$ is a L\'{e}vy measure. Since $t(\cos(x\lambda)-1)\leq 0$ we conclude $\Re(-t\psi(\lambda))\leq -\frac{1}{2}t\sigma^2\lambda^2$. Therefore,

\begin{equation*}
\displaystyle\int_{-\infty}^{\infty}|\phi(\lambda)|d\lambda\leq\displaystyle\int_{-\infty}^{\infty}\exp(-\frac{1}{2}t\sigma^2\lambda^2)d\lambda
\end{equation*}

\begin{equation*}
\displaystyle\int_{-\infty}^{\infty}\exp(-\frac{1}{2}t\sigma^2\lambda^2)d\lambda=\displaystyle\lim_{\lambda\rightarrow \infty}\frac{\sqrt{2\pi}}{2\sigma\sqrt{t}}\erf(\sigma \sqrt{\frac{t}{2}} \lambda)-\displaystyle\lim_{\lambda\rightarrow -\infty}\frac{\sqrt{2\pi}}{2\sigma\sqrt{t}}\erf(\sigma \sqrt{\frac{t}{2}} \lambda)=\frac{\sqrt{2\pi}}{\sigma\sqrt{t}}
\end{equation*}

Similarly,
\begin{equation*}
\displaystyle\int_{-\infty}^{\infty}|\phi(\lambda)|^2d\lambda=\displaystyle\int_{-\infty}^{\infty}\exp(-2t\Re(\psi(\lambda)))d\lambda\leq\frac{\sqrt{\pi}}{\sigma\sqrt{t}}
\end{equation*}

It is also well-known that a uniformly continuous absolutely integrable function vanishes at infinity. If \cite[Proposition 2 (i)]{bertoin1996levy} holds then $\psi$ behaves like $\lambda^2$ at infinity implying that $\frac{q}{q+\psi}$ goes to zero at infinity like $\frac{1}{\lambda^2}$ and consequently belongs to $L^2(\R)\cap L^1(\R)$.\\
$ii)$ Since $0<|\phi|\leq1$, $\phi$ is bounded and bounded away from zero. Since $\Re(\psi)\geq 0$, we conclude from \cite[Proposition 2]{bertoin1996levy} that $\frac{q}{q+\psi}+\alpha$ is bounded and bounded away from zero. \\
$iii)$ Since $\frac{q}{q+\psi}$ is a characteristic function, it is positive definite by definition. Let $\beta>0$. From the definition of the gamma function we have

\begin{equation*}
(\frac{q}{q+\psi(\lambda)})^{\beta}=\displaystyle\int_0^{\infty}\frac{x^{\beta-1}}{\Gamma(\beta)}\exp(-(\frac{q+\psi(\lambda)}{q})x)dx=\displaystyle\int_0^{\infty}\frac{x^{\beta-1}\exp(-x)}{\Gamma(\beta)}\exp(-(\frac{x}{q})\psi(\lambda))dx
\end{equation*}
The function $\exp(-(\frac{x}{q})\psi(\lambda))$ is positive definite with respect to $\lambda$ and $\frac{x^{\beta-1}\exp(-x)}{\Gamma(\beta)}$ is positive. Therefore, $(\frac{q}{q+\psi(\lambda)})^{\beta}$ is positive definite for all $\beta>0$. \\ 
$iv)$ It is well known that the modulus of the characteristic function is less than or equal to one and since the modulus is also equal to $\exp(-t\Re(\psi)) $ and $t>0$, we conclude that $\Re(\psi)\geq 0$. This fact together with \cite[Proposition 2]{bertoin1996levy} indicates that the functions $\phi+\alpha+1$, $\psi+\alpha$, and $\frac{q}{q+\psi}+\alpha$ are all non-vanishing. Therefore, $\Re(\psi+\alpha)\geq \alpha$ implying that index of $\psi+\alpha$ is zero. Similarly, the curve traced out by $\phi+\alpha+1=\exp(-t\Re(\psi))\exp(-it\Im(\psi))+\alpha+1$ resides in the right half plane and doesn't enclose the origin. We furthermore observe that

\begin{equation*}
\Re(\frac{q}{q+\psi}+\alpha)=(\frac{q}{|q+\psi|})^2+\alpha+\frac{q}{|q+\psi|^2}\Re(\psi)
\end{equation*}

 Since $\Re(\frac{q}{q+\psi}+\alpha)\geq (\frac{q}{|q+\psi|})^2+\alpha$, we conclude that $\Index(\frac{q}{q+\psi}+\alpha)=0$.
\end{proof}


Part iv of the above proposition is just included for its intrinsic interest: we do not actually use the index of $\Psi +\alpha$ for anything.
We next solve the multiplicative Riemann-Hilbert problem for the function  $\frac{q}{q+\psi}$ of the above proposition. This function goes to zero at infinity (quadratically) and therefore does not have an index. Thus, we cannot argue that there exists a unique and positive definite solution(s) to the Riemann-Hilbert problem, but we can use our techniques to at least find a positive definite solution.
The supremum and infimum of the L\'{e}vy process $X_{\tau}$ where $\tau=\tau(q)$ is exponentially distributed are defined to be $M_q=\sup\{X_s: s\leq \tau(q)\}$ and $I_q=\inf\{X_s: s\leq \tau(q)\}$ respectively. It is shown in \cite[VI.2.Theorem 5]{bertoin1996levy} how to solve for the supremum and infimum in terms of a Riemann-Hilbert factorization problem, and evidently the factors must be positive definite to be valid solutions. The next theorem thus gives the distributions of the extrema. This theorem is more general and it is not limited to particular classes of L\'{e}vy processes such as $\alpha$-stable processes considered in \cite{hackkuz2013}.

\begin{theorem}
The distributions $f_{M_q}$ and $f_{I_q}$ of the extrema $M_q$ and $I_q$ of the L\'{e}vy process $X_{\tau}$ where $\tau=\tau(q)$ is exponentially distributed are given as follows:

\begin{enumerate}[i)]
\item $f_{M_q}(\lambda)=\hat{\psi}_{q_+}(\lambda)$ where $\psi_{q_+}(\lambda)=exp((i\lambda+1)g_+(\lambda)-\frac{c}{2})$, and

    \begin{equation*}
    g_+(\lambda)=\lim_{h\longrightarrow0}\sum_{n=-\infty}^{\infty}g\left(nh\right)\frac{\exp\pi i(\lambda/h-n)-1}{2\pi i(\lambda/h-n)}
    \end{equation*}

\item $f_{I_q}(\lambda)=\hat{\psi}_{q_-}(\lambda)$ where $\psi_{q_-}(\lambda)=exp((i\lambda+1)g_-(\lambda)-\frac{c}{2})$, and
    \begin{equation*}
    g_-(\lambda)=\lim_{h\longrightarrow0}\sum_{n=-\infty}^{\infty}g\left(nh\right)\frac{1-\exp-i\pi(\lambda/h-n)}{2\pi i(\lambda/h-n)}
    \end{equation*}

\end{enumerate}
 The function $g$ is defined by $g(\lambda)=\frac{c+\ln(q)-\ln(q+\psi(\lambda))}{1+i\lambda}$.
\end{theorem}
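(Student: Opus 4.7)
The plan is to convert the multiplicative factorization of $\phi(\lambda) := q/(q+\psi(\lambda))$ into an additive Riemann-Hilbert problem via logarithms, solve it using Theorem \ref{th:additive.RH}, and then identify the resulting factors with the characteristic functions of $M_q$ and $I_q$ via the uniqueness of the Wiener-Hopf factorization. By Proposition \ref{posdef}(i), $\phi$ decays quadratically at infinity, so $\log\phi(\lambda)\sim -2\log|\lambda|$ fails to be square-integrable and Theorem \ref{main} does not apply directly. I would therefore work with the auxiliary function $g(\lambda):=(c+\log q-\log(q+\psi(\lambda)))/(1+i\lambda)$. The logarithm is unambiguous because $\Re(q+\psi)\geq q>0$ by Proposition \ref{posdef}, and the division by $1+i\lambda$ restores square-integrability since $|g(\lambda)|=O(\log|\lambda|/|\lambda|)$ at infinity. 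The constant $c$ is selected so that $g_\pm(0)=c/2$, enforcing the normalization $\psi_{q_\pm}(0)=1$.

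The next step is to verify the hypotheses of Theorem \ref{th:additive.RH} for $g$. Continuity and square-integrability are routine from the growth bound and continuity of $\psi$, but positive-definiteness of $g$ is the principal obstacle, since the ``bounded away from zero'' hypothesis of Theorem \ref{th:positivedef} fails for $\phi$ itself. My approach is to use the perturbation $\tilde\phi_\alpha:=\phi+\alpha$ introduced in the remark following Corollary \ref{cor:factorization}: by Proposition \ref{posdef}(ii)--(iii), $\tilde\phi_\alpha$ is bounded, bounded away from zero, and has positive-definite powers (this last property, which does not follow formally from positive-definiteness of powers of $\phi$, I would establish by adapting the gamma-function integral representation from the proof of Proposition \ref{posdef}(iii) to the shifted denominator). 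The converse direction of Theorem \ref{th:positivedef} then gives that $\log\tilde\phi_\alpha$ differs from a positive-definite function by a constant. Division by $1+i\lambda$ preserves positive-definiteness, because on the Fourier side it amounts to convolution with the non-negative kernel $e^{-x}\mathbf 1_{[0,\infty)}(x)$, the inverse Fourier transform of $1/(1+i\lambda)$. Finally, the Lemma following Corollary \ref{cor:factorization} controls the behavior of the exponential/logarithmic operations as $\alpha\to 0^+$, passing the decomposition to $\phi$ itself.

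Having obtained $g=g_++g_-$ with the series formulas stated in Theorem \ref{th:additive.RH}, I define $\psi_{q_\pm}(\lambda):=\exp((1+i\lambda)g_\pm(\lambda)-c/2)$. A direct algebraic check gives $\psi_{q_+}\psi_{q_-}=\phi$. Since $g_\pm$ is holomorphic in the respective half-plane, so is $(1+i\lambda)g_\pm$; boundedness of $\psi_{q_+}$ in the upper half-plane then follows from the bound $\Re((1+i\lambda)g_+)\leq \mathrm{const}$, which in turn is inherited from $\Re(q+\psi)\geq q$ together with the decay of $g_+$, and symmetrically for $\psi_{q_-}$. Applying the Paley-Wiener theorem (Theorem \ref{pw}) in each half-plane, the inverse Fourier transform of $\psi_{q_+}$ is supported in $[0,\infty)$ and that of $\psi_{q_-}$ in $(-\infty,0]$. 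The uniqueness of the Wiener-Hopf factorization \cite[VI.2, Theorem 5]{bertoin1996levy} applied to the decomposition $\phi=\psi_{q_+}\psi_{q_-}$ into a product of functions supported on the positive and negative half-lines respectively then forces $\psi_{q_+}$ and $\psi_{q_-}$ to coincide with the characteristic functions of $M_q$ and $I_q$ (which in particular yields their positive-definiteness a posteriori). Inverse Fourier transformation finally produces the claimed density formulas for $f_{M_q}$ and $f_{I_q}$.
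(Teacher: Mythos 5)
Your proposal reproduces the paper's argument almost exactly: the same auxiliary function $g(\lambda)=(c+\ln q-\ln(q+\psi(\lambda)))/(1+i\lambda)$, the same use of the positive definite factor $1/(1+i\lambda)$ to restore square-integrability, the same appeal to Theorem \ref{th:additive.RH} for the additive splitting and the series formulas, the same exponentiation, and the same identification of the factors via \cite[VI.2, Theorem 5]{bertoin1996levy}. Where you go beyond the paper is in two spots: you notice that the bounded-away-from-zero hypothesis of Theorem \ref{th:positivedef} fails for $q/(q+\psi)$ and try to repair it by perturbing to $\phi+\alpha$; and your endgame (Paley--Wiener in each half-plane plus uniqueness of the Wiener--Hopf factorization, with positive definiteness of $\psi_{q_\pm}$ recovered a posteriori) is more careful than the paper's one-line citation and avoids the paper's questionable claim that multiplication by $1+i\lambda$ preserves positive definiteness of $g_\pm$.

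However, the perturbation step does not close the gap it is meant to close. Any positive definite function $p$ satisfies $|p(x)|\le p(0)$ and is therefore bounded, while under the standing assumption $\psi(\lambda)\sim\lambda^2$ the numerator $c+\ln q-\ln(q+\psi(\lambda))$ tends to $-\infty$; hence it is not positive definite for any finite $c$, and Schur's product theorem cannot be applied to the two factors of $g$ separately. Quantitatively, in your perturbed version the admissible constants must satisfy $c_\alpha\ge-\tfrac{1}{2}\ln\alpha+O(1)$ (compare the bound $p_\alpha(0)=c_\alpha+\ln(1+\alpha)$ with $\inf_\lambda\bigl(c_\alpha+\ln(\phi(\lambda)+\alpha)\bigr)=c_\alpha+\ln\alpha$), so $c_\alpha\to\infty$ as $\alpha\to 0^{+}$ and no finite $c$ survives the limit; the Lemma on $\exp$ cannot rescue this because the quantity being controlled diverges. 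The paper's own proof has the same defect at the same point (it simply asserts the positive definiteness of $c+\ln q-\ln(q+\psi)$), so you have not done worse than the source; but a complete argument would have to establish positive definiteness, or at least integrability of the Fourier transform, of the quotient $g$ itself directly rather than of its numerator.
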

\begin{proof}
 By Proposition \eqref{posdef} $(iii)$, $(\frac{q}{q+\psi(\lambda)})^{\beta}$ is positive definite for all $\beta>0$ implying that $\ln(q)-\ln(q+\psi(\lambda))$ differs by a constant from a positive definite function. Therefore, there exists a constant $c$ such that $c+\ln(q)-\ln(q+\psi(\lambda))$ is positive definite. Although this function is not in $L^2(\mathbb{R})$, we will multiply it by another positive definite function so that the product function belongs to $L^2(\mathbb{R})$. A suitable function is $\frac{1}{1+i\lambda}$ which is positive definite because its inverse Fourier transform is non-negative. According to Theorem \eqref{th:additive.RH}, there exist positive definite functions $g_+$ and $g_-$ such that $g(\lambda)=\frac{c+\ln(q)-\ln(q+\psi(\lambda))}{1+i\lambda}=g_+(\lambda)+g_-(\lambda)$. By exponentiating we have:

\begin{equation*}
\frac{q}{q+\psi(\lambda)}=\exp((i\lambda+1)g_+(\lambda)-\frac{c}{2})\exp((i\lambda+1)g_-(\lambda)-\frac{c}{2})
\end{equation*}

Since multiplication by $(i\lambda+1)$ corresponds to applying a differential operator to the Fourier transform, and since differential operators do not increase support, it follows that $(i\lambda+1)g_+(\lambda)$ and $(i\lambda+1)g_-(\lambda)$ are positive definite. Thus we obtain a factorization $\psi_{q_+}(\lambda) \psi_{q_-}(\lambda)$ of $\frac{q}{q+\psi(\lambda)}.$ It is shown in \cite[VI.2.Theorem 5]{bertoin1996levy} that the first factor is the characteristic function of $M_q$ and the second factor is the characteristic function of $I_q$. The distributions are given by Fourier transform of the characteristic functions. 

\end{proof}

\begin{theorem}
Let $\tau=\tau(q)$ be a geometrically distributed random time with parameter $0<q<1$ and let $\phi(\lambda)=\exp(-\psi(\lambda))$ be the characteristic function of the L\'{e}vy process $X_1$ such that $\displaystyle\lim_{\lambda\rightarrow \pm\infty}\psi(\lambda)=\displaystyle\lim_{\lambda\rightarrow \pm\infty}\lambda^2$. Then, the characteristic function of $X_{\tau}$ which is of the form  $f(\lambda)=\frac{1-q}{1-q\phi(\lambda)}$ has the following properties:
\begin{enumerate}[i)]
	\item $f\in L^2(\R)+\C 1,$ \item $|f|$ is bounded and bounded away from zero, \item $f^\beta$ is positive definite for all $\beta>0$.
\end{enumerate}
\end{theorem}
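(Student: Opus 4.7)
The plan is to verify the three hypotheses of Theorem \ref{main} directly for $f(\lambda) = \frac{1-q}{1-q\phi(\lambda)}$, exploiting in each case the fact that $|\phi(\lambda)| \leq 1$ and that $\phi$ has Gaussian-like decay at infinity (coming from the assumption $\psi(\lambda) \to \lambda^2$).

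First I would handle (ii), which is purely algebraic. Since $|\phi(\lambda)| \leq 1$ and $0<q<1$, the triangle inequality gives $1-q \leq |1-q\phi(\lambda)| \leq 1+q$, so
\[
\frac{1-q}{1+q} \leq |f(\lambda)| \leq 1,
\]
which establishes both bounds. Next, for (i), I would show $f \to 1-q$ at infinity and that the remainder is $L^2$. A short calculation gives
\[
f(\lambda) - (1-q) = (1-q)\,\frac{q\phi(\lambda)}{1-q\phi(\lambda)},
\]
and by (ii) the denominator is bounded below by $1-q$, so $|f(\lambda)-(1-q)| \leq q\,|\phi(\lambda)|$. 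Because $\psi(\lambda)\to \lambda^2$ at infinity and $\Re\psi\geq 0$ (as noted in the proof of Proposition \ref{posdef} iv), $|\phi(\lambda)| = \exp(-\Re\psi(\lambda))$ decays faster than any polynomial, and in particular $|\phi|^2$ is integrable, so $f-(1-q)\in L^2(\R)$, giving (i).

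The main obstacle is (iii), the positive definiteness of all fractional powers $f^\beta$. My approach is to expand by the generalized binomial series. For $\beta>0$ and $|q\phi(\lambda)|<1$,
\[
f(\lambda)^\beta = (1-q)^\beta\,(1-q\phi(\lambda))^{-\beta} = (1-q)^\beta \sum_{n=0}^\infty \binom{n+\beta-1}{n}\,q^n\,\phi(\lambda)^n,
\]
and the coefficients $\binom{n+\beta-1}{n} = \Gamma(n+\beta)/(n!\,\Gamma(\beta))$ are non-negative. Since $\phi$ is positive definite (it is a characteristic function), each $\phi^n$ is positive definite by Theorem \ref{th:schur} (via Bochner, Theorem \ref{th:bochner}). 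The series converges uniformly on $\R$ because $|q\phi(\lambda)|\leq q<1$ and $\binom{n+\beta-1}{n}q^n \sim n^{\beta-1}q^n/\Gamma(\beta)$ is summable. Thus $f^\beta$ is a uniform limit of a convex combination of positive definite functions and is therefore positive definite, which completes the verification of (iii).

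The only step that requires any care is confirming the convergence regime of the binomial series; since the expansion is valid pointwise for every $\lambda\in\R$ and the series is absolutely and uniformly convergent there, no further delicacy arises. All three conditions thus follow, and the theorem is proved.
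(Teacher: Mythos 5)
Your proposal is correct and follows essentially the same route as the paper: triangle-inequality bounds on $|1-q\phi|$ for (ii), Gaussian-type decay of $\phi$ for (i), and the generalized binomial expansion of $(1-q\phi)^{-\beta}$ with non-negative coefficients for (iii). Your version is in fact somewhat more careful than the paper's (explicit lower bound $(1-q)/(1+q)$, and an explicit convergence argument for the binomial series), but the underlying ideas are identical.
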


\begin{proof}
$i)$ The function $f$ decays to $1-q$ like an exponential. Therefore, $f\in L^2(\R)+\C 1$.

$ii)$ Since $0<|\phi(\lambda)|\leq 1$, it follows that $1-q<|f(\lambda)|\leq1$ for all $\lambda\in \mathbb{R}$.

$iii)$ By the binomial series, we have

\begin{equation*}
\frac{1}{(1-q\phi(\lambda))^\beta}=\sum_{k=0}^{\infty}q^k \dbinom{k+\beta-1}{k}(\phi(\lambda))^k
\end{equation*}

 Since $\dbinom{k+\beta-1}{k}=\frac{\Gamma(k+\beta)}{\Gamma(k+1)\Gamma(\beta)}=\frac{\Gamma(k+\beta)}{k!\Gamma(\beta)}$ is positive, $q^k$ and $(1-q)^\beta$ are positive for all $\beta>0$ and $(\phi(\lambda))^k=\exp(-k\psi(\lambda))$ is positive definite,  we conclude that $f^\beta$ is positive definite for all $\beta>0$.
\end{proof}

The following corollary follows from the above Theorem together with Theorem \eqref{main}:

\begin{corollary}
Let $\tau=\tau(q)$ be a geometrically distributed random time with parameter $0<q<1$ and let $\phi(\lambda)=\exp(-\psi(\lambda))$ be the characteristic function of the L\'{e}vy process $X_1$ such that $\displaystyle\lim_{\lambda\rightarrow \pm\infty}\psi(\lambda)=\displaystyle\lim_{\lambda\rightarrow \pm\infty}\lambda^2$. Then, the characteristic function of $X_{\tau}$, i.e. $f(\lambda)=\frac{1-q}{1-q\phi(\lambda)}$ has a factorization of the form $f(\lambda)=\Phi_{+}\Phi_-$ where $\Phi_+$ is positive definite on $\R$ and extends to a bounded holomorphic function on the upper half plane, and $\Phi_-$ is positive definite on $\R$ and extends to a bounded holomorphic function on the lower half plane.
\end{corollary}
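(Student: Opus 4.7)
The plan is essentially to verify that $f(\lambda)=\frac{1-q}{1-q\phi(\lambda)}$ meets the three hypotheses of Theorem \ref{main}, and then to invoke Theorem \ref{main} as a black box. Since the preceding theorem already established precisely these three properties (membership in $L^2(\R)+\C 1$, boundedness away from zero, and positive definiteness of all real powers), almost no new work is needed.

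First I would state that by the immediately preceding theorem, $f$ satisfies conditions (i), (ii), and (iii) of Theorem \ref{main}. I would make this explicit by citing each of the three parts in turn, so that the reader does not have to cross-reference. Then I would apply Theorem \ref{main} to $f$ to conclude the existence of a factorization $f=\Phi_+\Phi_-$ with $\Phi_+$ positive definite on $\R$ and extending to a bounded holomorphic function on the upper half plane, and $\Phi_-$ positive definite on $\R$ and extending to a bounded holomorphic function on the lower half plane. This is exactly the statement of the corollary.

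There is no substantive obstacle here: the corollary is a direct bundling of the preceding theorem (which supplies hypotheses) with Theorem \ref{main} (which supplies the conclusion). The only minor care needed is to note that the behavior of $\psi$ at infinity used in the preceding theorem, together with the exponential decay of $\phi$ along the real axis implied by $\psi(\lambda)\sim \lambda^2$, guarantees the $L^2$ hypothesis in the decomposition $f-(1-q)\in L^2(\R)$, so that condition (i) is genuinely available; but this has already been handled in the previous theorem's proof.

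\begin{proof}
By the preceding theorem, the function $f(\lambda)=\frac{1-q}{1-q\phi(\lambda)}$ satisfies the three hypotheses of Theorem \ref{main}: part (i) gives $f\in L^2(\R)+\C 1$, part (ii) gives that $|f|$ is bounded and bounded away from zero, and part (iii) gives that $f^\beta$ is positive definite for every $\beta>0$. Applying Theorem \ref{main} to $f$ yields a factorization $f=\Phi_+\Phi_-$ in which $\Phi_+$ is positive definite on $\R$ and extends to a bounded holomorphic function on the upper half plane, and $\Phi_-$ is positive definite on $\R$ and extends to a bounded holomorphic function on the lower half plane.
\end{proof}
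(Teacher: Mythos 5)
Your proof is correct and matches the paper's approach exactly: the paper itself states that the corollary "follows from the above Theorem together with Theorem \ref{main}," i.e., the preceding theorem supplies the three hypotheses and Theorem \ref{main} supplies the factorization. Your write-up simply makes this one-line deduction explicit.
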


\begin{proposition}
Let $(L^{-1},H)$ be the ladder process of a given L\'{e}vy process $X$ with characteristic function $\phi(\alpha,\beta)=\mathbb{E}[\exp(i\alpha L^{-1}(t)+i\beta H(t))]=\exp(-\kappa(\alpha,\beta)t)$ where $t\geq 0$ and the bivariate Laplace exponent $\kappa(\alpha,\beta)$ is of the following form \cite[VI.2.Corollary 10]{bertoin1996levy}:

\begin{equation*}
\kappa(\alpha,\beta)=k\exp\bigg(\displaystyle\int_{0}^{\infty}dt\displaystyle\int_{[0,\infty)}(e^{-t}-e^{-\alpha t-\beta x})t^{-1}\mathbb{P}(X_t\in dx)\bigg)
\end{equation*}
Then, the following hold:
\begin{enumerate}[i)]
\item $|\phi|$ is bounded and bounded away from zero,
 \item $\phi^\gamma$ is positive definite for all $\gamma>0$.
\end{enumerate}
\end{proposition}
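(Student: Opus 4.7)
I would prove (ii) first, as it is essentially immediate, and then address (i), where the substantive work lies.

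For (ii), the identity $\phi(\alpha,\beta)^\gamma = \exp(-\gamma t\,\kappa(\alpha,\beta))$ realizes $\phi^\gamma$ as the characteristic function of the bivariate ladder process $(L^{-1}(\gamma t), H(\gamma t))$ evaluated at time $\gamma t$. Since the characteristic function of any $\mathbb{R}^2$-valued random vector is positive definite by the two-variable version of Bochner's theorem (cf.\ Theorem \ref{th:bochner}), $\phi^\gamma$ is positive definite for every $\gamma>0$. This also mirrors the reasoning used in Proposition \ref{posdef}(iii): writing $\phi^\gamma = \exp(-s\kappa)$ with $s=\gamma t\geq 0$ exhibits it as a member of the semigroup of characteristic functions generated by $\kappa$.

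For (i), the bound $|\phi(\alpha,\beta)|\leq 1$ is automatic because $\phi$ is a characteristic function. The bounded-away-from-zero claim is equivalent, via $|\phi|=\exp(-t\,\Re\kappa)$, to a uniform upper bound $\sup_{(\alpha,\beta)\in\mathbb{R}^2}\Re\kappa(\alpha,\beta)<\infty$. From the Fristedt-type representation $\kappa=k\exp(w)$ given in the statement, I obtain $\Re\kappa\leq|\kappa|=k\exp(\Re w)$, so the task reduces to bounding
\[
\Re w(\alpha,\beta) = \int_0^\infty\int_{[0,\infty)} \frac{e^{-t}-\cos(\alpha t+\beta x)}{t}\,\mathbb{P}(X_t\in dx)\,dt
\]
uniformly from above. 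My plan is to split the outer integral at $t=1$: on the tail $t\geq 1$, the estimate $|e^{-t}-\cos(\alpha t+\beta x)|\leq 2$ together with integrability of $t^{-1}$ against the finite total mass of $\mathbb{P}(X_t\in\cdot)$ gives a uniform bound; on $(0,1]$, I would use $1-\cos u\leq u^2/2$ and $1-e^{-t}\leq t$ together with a second-moment estimate $\int x^2\,\mathbb{P}(X_t\in dx)\lesssim t$ to remove the $1/t$ singularity.

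The main obstacle is uniformity in $(\alpha,\beta)$: the Frullani-type identity $\int_0^\infty(e^{-t}-\cos\omega t)t^{-1}dt = \tfrac{1}{2}\log(1+\omega^2)$ suggests $\Re w$ can at best grow logarithmically in $|\alpha|,|\beta|$, so that $|\phi|$ need not be globally bounded away from zero for an arbitrary L\'evy process $X$. I expect the cleanest route to the stated conclusion is to invoke the implicit regularity in the Fristedt representation of $(L^{-1},H)$---for example that the associated bivariate L\'evy measure is finite (the compound-Poisson case)---so that $\Re w$ is manifestly bounded; absent such a hypothesis, the lower bound on $|\phi|$ would have to be relaxed from a global bound to one uniform on compact subsets of $\mathbb{R}^2$, which is still sufficient for the applications of Theorem \ref{main} envisaged in the preceding results.
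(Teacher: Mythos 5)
Your proof of (ii) is correct and is essentially the paper's argument: the paper says that since $\phi$ is a characteristic function it is positive definite, hence $\kappa$ is negative definite, hence $\gamma\kappa$ is negative definite, hence $\exp(-\gamma t\kappa)$ is positive definite. Your phrasing via the semigroup property (reading $\phi^\gamma$ as the characteristic function of the ladder pair at time $\gamma t$) is the same idea in probabilistic rather than Schoenberg language; either is fine.

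For (i), the paper's entire proof is the sentence ``Since $0<|\phi|\leq 1$, $\phi$ is bounded and bounded away from zero,'' which is exactly the non-sequitur you declined to write: pointwise nonvanishing of $|\phi|$ does not give a uniform lower bound, and for a characteristic function of the form $\exp(-t\kappa)$ a global lower bound is equivalent to $\sup\Re\kappa<\infty$, which is false in general. Your analysis of $\Re w$ is the right diagnostic: the Frullani-type integral grows without bound unless the bivariate L\'evy measure of the ladder process is finite (plus at most a killing term), i.e.\ unless $(L^{-1},H)$ is compound Poisson with drift zero. A concrete failure is standard Brownian motion, for which $\kappa(\alpha,\beta)$ is essentially $\sqrt{2\alpha+\beta^2}$, so that on the imaginary axis $\Re\kappa(-i\alpha,0)\sim\sqrt{|\alpha|}\to\infty$ and $|\phi(\alpha,0)|=\exp(-t\sqrt{|\alpha|})\to 0$. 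So you have not missed an idea that the paper supplies; rather, you have correctly identified that the statement needs an additional hypothesis (finiteness of the ladder L\'evy measure) or a weakening of the conclusion to ``bounded away from zero on compact sets,'' and either repair would be consistent with how the proposition is used. The one thing you could tighten is the reduction $\Re\kappa\le|\kappa|=k\exp(\Re w)$: since $|\phi|=\exp(-t\Re\kappa)$ and $\Re\kappa\ge 0$, what you actually need to bound is $\Re\kappa$ itself, and bounding it by $|\kappa|$ is lossless here only because both blow up together; but this does not affect your (correct) negative conclusion.
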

\begin{proof}
$i)$ Since $0<|\phi|\leq1$, $\phi$ is bounded and bounded away from zero.\\

$ii)$ Since $\phi$ is a characteristic function, it is positive definite. Hence, $\kappa(\alpha,\beta)$ is negative definite. Since $\gamma>0$ , $\gamma\kappa(\alpha,\beta)$ is negative definite which implies that $\phi^{\gamma}$ is positive definite.



\end{proof}
\bibliographystyle{plain}

\end{document}